\theoremstyle{plain}
\newtheorem{lem}[subsection]{Lemma}
\newtheorem{prop}[subsection]{Proposition}
\newtheorem{thm}[subsection]{Theorem}
\newtheorem{assumption}[subsection]{Assumption}
\theoremstyle{definition}
\newtheorem{defn}[subsection]{Definition}
\theoremstyle{remark}
\newtheorem{rem}[subsection]{Remark}
\newcommand{\Mod}{{ \mathsf{Mod} }}
\newcommand{\ModR}{{ \mathsf{Mod}_\capR }}
\newcommand{\Alg}{{ \mathsf{Alg} }}
\newcommand{\TQ}{{ \mathsf{TQ} }}
\newcommand{\AlgM}{{\Alg_\mathcal{M} }}
\newcommand{\AlgN}{{\Alg_\mathcal{N} }}
\newcommand{\AlgJ}{{ \Alg_J }}
\newcommand{\AlgO}{{ \Alg_\capO }}
\newcommand{\capO}{{ \mathcal{O} }}
\newcommand{\capR}{{ \mathcal{R} }}
\newcommand{\capM}{{\mathcal{M} }}
\newcommand{\capN}{{\mathcal{N} }}
\newcommand{\id}{{ \mathrm{id} }}
\newcommand{\iso}{{ \cong }}
\DeclareMathOperator*{\hocolim}{hocolim}
\DeclareMathOperator*{\holim}{holim}
\DeclareMathOperator{\BAR}{Bar}
\DeclareMathOperator{\im}{im}
\DeclareMathOperator{\diag}{diag}
\newcommand{\abs}[1]{\left\vert#1\right\vert}
\newcommand{\sett}[1]{\left\{#1\right\}}
\title[$\TQ$-completion and the Taylor tower of the identity]{$\TQ$-completion and the Taylor tower of the identity functor}
\author{Nikolas Schonsheck}
\address{Department of Mathematical Sciences, University of Delaware}
\email{nischon@udel.edu}
\begin{document}
\maketitle
\begin{abstract}
The goal of this short paper is to study the convergence of the Taylor tower of the identity functor in the context of operadic algebras in spectra. Specifically, we show that if $A$ is a $(-1)$-connected $\capO$-algebra with $0$-connected $\TQ$-homology spectrum $\TQ(A)$, then there is a natural weak equivalence $P_\infty(\id)A \simeq A^\wedge_\TQ$ between the limit of the Taylor tower of the identity functor evaluated on $A$ and the $\TQ$-completion of $A$. Since, in this context, the identity functor is only known to be $0$-analytic, this result extends knowledge of the Taylor tower of the identity beyond its ``radius of convergence.''
\end{abstract}

\section{Introduction}

Working in the context of symmetric spectra \cite{Hovey_Shipley_Smith,Schwede_book_project} or, more generally, modules over a commutative ring spectrum $\capR$, we consider any algebraic structure in the closed symmetric monoidal category $(\ModR, \wedge, \capR)$ that can be described by a reduced operad $\capO$; that is, $\capO[0]=\ast$ and, hence, $\capO$-algebras are non-unital. See \cite{EKMM} for another construction of a well-behaved category of spectra. The aim of this short paper is to study convergence properties of the Taylor tower of the identity functor in the category $\AlgO$ of $\capO$-algebras.

For any homotopy functor $F$ from spaces to spaces or spectra, Goodwillie constructs \cite{Goodwillie_calculus_1,Goodwillie_calculus_2,Goodwillie_calculus_3} universal $n$-excisive approximations $P_nF$ of $F$, which arrange in a tower as below. 
\begin{align}\label{eqn_generic_taylor_tower}
\xymatrix@R=3ex{
&F \ar[dl] \ar[d] \ar[dr]\\
P_1F & P_2F \ar[l] & P_3F \ar[l] & \cdots\ar[l]
}
\end{align}
For a fixed space $X$, one defines the functor $P_\infty F$ objectwise by $P_\infty F(X) = \holim_nP_nF(X)$. One of the useful properties of this tower is that for suitably nice functors $F$ and sufficiently connected spaces $X$, one has $F(X) \simeq P_\infty F(X)$. In this case, the tower recovers $F$ from its $n$-excisive approximations $P_nF$. Similar results are obtained in the context of $\capO$-algebras in, for instance, \cite{Pereira_general_context} and \cite{Pereira_spectral_operad}.

The Taylor tower of the identity functor in spaces has been widely studied and is known to contain a great deal of non-trivial information and structure; see, for instance \cite{Arone_Mahowald}, \cite{Behrens_EHP_sequence}, \cite{Ching_bar_constructions}, and \cite{Johnson_derivatives}. It is shown in \cite[4.3]{Goodwillie_calculus_2} that the identity functor in spaces is $1$-analytic and, hence, that the Taylor tower of the identity converges strongly on all $1$-connected spaces. In the context of $\capO$-algebras, it follows from \cite[1.6]{Ching_Harper} that the identity functor in $\AlgO$ is $0$-analytic and, hence, its Taylor tower converges strongly on all $0$-connected $\capO$-algebras. Additional structure possessed by this tower is explored in \cite{Clark_derivatives}.

The goal of this paper is to study, in $\AlgO$, the behavior of the Taylor tower of the identity functor when evaluated on $\capO$-algebras which are not assumed to be $0$-connected. In particular, our main theorem applies to any $(-1)$-connected cofibrant $\capO$-algebra with $0$-connected $\TQ$-homology. To keep this paper appropriately concise, we briefly review $\TQ$-homology and completion in Section \ref{sec_background_preliminaries}, but refer the reader to \cite{Ching_Harper_derived_Koszul_duality}, \cite{Harper_modules_over_operads}, \cite{Harper_Hess}, and \cite[Section 3]{Schonsheck_fibration_theorems} for further discussion. In short, the $\TQ$-completion $A_\TQ^\wedge$ of an $\capO$-algebra $A$ is defined as the homotopy limit of the canonical cosimplicial resolution of $A$ obtained by iterating the $\TQ$-Hurewicz map $A \to UQ(A) \simeq \TQ(A)$, and can be thought of as a spectral algebra analogue of Bousfield-Kan's $\mathbb{Z}$-completion \cite[I.4]{Bousfield_Kan} for spaces. The following is our main result.

\begin{thm}\label{thm_main_theorem}
Given a $(-1)$-connected cofibrant $\capO$-algebra $A$, if $\TQ(A)$ is $0$-connected, then there is a natural weak equivalence $P_\infty(\id)A \simeq A_\TQ^\wedge$.
\end{thm}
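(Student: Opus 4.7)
The plan is to factor the desired equivalence through the $\TQ$-completion coaugmentation $A \to A_\TQ^\wedge$, reducing to the $0$-connected case already covered by the $0$-analyticity result of \cite[1.6]{Ching_Harper}. Schematically, I want
\begin{equation*}
P_\infty(\id) A \wequiv P_\infty(\id)\bigl(A_\TQ^\wedge\bigr) \wequiv A_\TQ^\wedge,
\end{equation*}
where the second weak equivalence comes from analyticity applied to the (hopefully) $0$-connected algebra $A_\TQ^\wedge$, and the first from $\TQ$-invariance of $P_\infty(\id)$.

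First I would verify that $A_\TQ^\wedge$ is $0$-connected. Recall that $A_\TQ^\wedge$ is the homotopy limit of the cosimplicial $\TQ$-resolution $C^\bullet(A)$ associated to the free/forgetful adjunction defining $\TQ$-homology (see \cite[Section 3]{Schonsheck_fibration_theorems}). Using that $\TQ(A)$ is $0$-connected, one verifies that each cosimplicial level $C^n(A)$ is $0$-connected, and the standard connectivity estimate for homotopy limits of cosimplicial $0$-connected spectra then gives $0$-connectivity of $A_\TQ^\wedge$. By the $0$-analyticity of the identity functor in $\AlgO$, this yields the natural weak equivalence $P_\infty(\id)(A_\TQ^\wedge) \wequiv A_\TQ^\wedge$.

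It remains to show that the coaugmentation $A \to A_\TQ^\wedge$ induces a weak equivalence on $P_\infty(\id)$. My approach is to identify $P_n(\id) A$ with the partial totalization $\Tot^n C^\bullet(A)$ of the $\TQ$-resolution, naturally in $A$ and compatibly with the cosimplicial structure, so that the towers $\{P_n(\id) A\}_n$ and $\{\Tot^n C^\bullet(A)\}_n$ agree. Under this identification, the map induced by the coaugmentation is the map of partial totalizations coming from the map of cosimplicial objects $C^\bullet(A) \to C^\bullet(A_\TQ^\wedge)$; the latter is a level weak equivalence because the extra codegeneracy on $C^\bullet(A_\TQ^\wedge)$ together with the $0$-connectivity from the previous paragraph forces the coaugmentation to be a $\TQ$-equivalence at each cosimplicial level. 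Passing to $\Tot^n$ and then to the homotopy limit in $n$ gives the desired equivalence.

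The principal obstacle is this identification $P_n(\id) A \wequiv \Tot^n C^\bullet(A)$ and the connectivity bookkeeping required to justify it for $A$ of this limited connectivity. Since the identity in $\AlgO$ is only known to be $0$-analytic rather than $\TQ$-analytic, one cannot simply stabilize: one must track connectivities carefully through the construction of $P_n(\id)$ and verify convergence of the relevant Bousfield--Kan-type spectral sequence on $(-1)$-connected inputs whose $\TQ$-homology is $0$-connected. Once this compatibility is in place, the remainder of the argument is largely formal.
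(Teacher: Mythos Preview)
Your proposal has two genuine gaps. First, the claim that $A_\TQ^\wedge$ is $0$-connected does not follow from the levels $(UQ)^{n+1}A$ being $0$-connected: there is no ``standard connectivity estimate'' saying that a homotopy limit of a cosimplicial diagram of $0$-connected objects is $0$-connected (indeed $\Tot$ generally lowers connectivity). In the paper this $0$-connectivity is obtained only \emph{after} the main theorem, as a consequence of the identification with $P_\infty(\id)A$ together with the connectivity of the layers $D_n(\id)A$; it is not available as an input. Second, and more seriously, the proposed identification $P_n(\id)A \simeq \Tot^n C^\bullet(A)$ is unjustified and there is no reason to expect it to hold termwise: the Taylor tower and the $\Tot$-tower are genuinely different filtrations (their fibers are $\capO[n]\wedge_{\Sigma_n}^\LL \TQ(A)^{\wedge^\LL n}$ versus $\Omega^n N^n C^\bullet(A)$), and you yourself flag this as the principal obstacle without supplying an argument. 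Your justification for the levelwise equivalence $C^\bullet(A)\to C^\bullet(A_\TQ^\wedge)$ is also confused: an extra codegeneracy on $C^\bullet(A_\TQ^\wedge)$ witnesses idempotency of completion, not that the coaugmentation $A\to A_\TQ^\wedge$ is a $\TQ$-equivalence.

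The paper's approach avoids these circularities by working with a bi-tower $P_\ast(\TQ_\ast)A$ built from the Taylor towers of the finite partial totalizations $\TQ_k:=\holim_{\Delta^{\leq k}}(UQ)^{\bullet+1}$. One computes $\holim$ two ways: a cofinality argument together with the estimate that $\id\to\TQ_{n-1}$ satisfies $O_n'(0,1)$ identifies the diagonal with $\{P_n(\id)A\}$, while stable-excision estimates $E_n'(-1,0)$ for $(UQ)^mU$ (proved by a double induction using higher Blakers--Massey and its dual) show that each column converges, i.e., $\TQ_kA\xrightarrow{\sim} P_\infty(\TQ_k)A$. Crucially, this requires neither a prior $0$-connectivity statement for $A_\TQ^\wedge$ nor a term-by-term comparison of the Taylor and $\Tot$ towers.
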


We also prove, in Section \ref{sec_sufficient_condition}, the following sufficient condition under which $\TQ(A)$ is $0$-connected.

\begin{thm}\label{thm_sufficient_condition}
Given a $(-1)$-connected cofibrant $\capO$-algebra $A$, if for some $k \geq 2$, the map
\begin{align}
\pi_0\big( \capO[k]\wedge_{\Sigma_k} A^{\wedge k}	\big) \to \pi_0(A)
\end{align}
induced by the $\capO$-algebra structure of $A$ is a surjection, then $\TQ(A)$ is $0$-connected.
\end{thm}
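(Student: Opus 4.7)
The plan is to analyze $\pi_i\TQ(A)$ in the range $i \leq 0$ directly via the standard bar construction model $\TQ(A) \simeq |\BAR(\tau_1\capO, \capO, A)|$, where $\tau_1\capO$ denotes the arity-$1$ truncation ($\tau_1\capO[1] = \capO[1]$ and $\tau_1\capO[k] = *$ for $k \neq 1$). Morally, $\TQ(A)$ is the derived indecomposables of $A$ and therefore kills the image in $\pi_*A$ of every operadic multiplication of arity $\geq 2$; the hypothesis says precisely that $\pi_0 A$ is entirely in the image of such a multiplication, so one expects $\pi_0\TQ(A) = 0$.

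First, since $A$ is $(-1)$-connected and cofibrant, standard K\"unneth-type estimates give that each $\capO[k] \wedge_{\Sigma_k} A^{\wedge k}$ is $(-1)$-connected, hence so is every simplicial level $\tau_1\capO \circ \capO^{\circ n} \circ A$, and therefore so is the realization $\TQ(A)$; in particular $\pi_{-1}\TQ(A) = 0$. Second, this same connectivity forces all higher differentials landing in $E^\infty_{0,0}$ of the bar spectral sequence to vanish for degree reasons, so $\pi_0\TQ(A)$ reduces to the coequalizer of the two outer face maps $d_0, d_1 \colon \pi_0\BAR_1 \rightrightarrows \pi_0\BAR_0$. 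The key computation is that for $k \geq 2$ the face $d_0$ vanishes on the arity-$k$ summand (because $\tau_1\capO[k] = *$ truncates the right $\capO$-action to zero in arities $\geq 2$) while $d_1$ is the $\capO$-algebra structure map; the arity-$1$ summand contributes no new relations beyond associativity of the $\capO[1]$-action. Taking $\capO[1] = \capR$ for notational simplicity, this yields
\begin{equation*}
\pi_0\TQ(A) \iso \pi_0 A \Big/ \sum_{k \geq 2} \mathrm{im}\!\left(\pi_0(\capO[k] \wedge_{\Sigma_k} A^{\wedge k}) \to \pi_0 A\right),
\end{equation*}
and the hypothesis forces the denominator to equal all of $\pi_0 A$, so $\pi_0\TQ(A) = 0$ and $\TQ(A)$ is $0$-connected.

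The main obstacle is making the bar spectral sequence argument rigorous within the paper's conventions: verifying the appropriate Reedy (or proper) cofibrancy of the simplicial bar construction, comparing strict with derived $\Sigma_k$-orbits on $\pi_0$ under the $(-1)$-connectivity assumption, and handling the small modifications required when $\capO[1] \neq \capR$. Each of these is standard in the references cited for $\TQ$-homology in the introduction, so I would invoke them without reproving the technical details. The conceptual content, by contrast, is simply the tautology that $\TQ$ annihilates operadic decomposables combined with the hypothesis that $\pi_0 A$ is entirely decomposable.
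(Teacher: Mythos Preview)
Your proposal is correct and follows essentially the same route as the paper: compute $\pi_0\TQ(A)$ via the bar construction spectral sequence, use $(-1)$-connectivity to isolate $E^2_{0,0}$, and then identify the coequalizer of $d_0,d_1$ on $\pi_0$ using that $d_0$ vanishes in arities $\geq 2$ while $d_1$ is the structure map. The only cosmetic difference is that the paper handles the $\capO[1]$ factor explicitly via flatness of $\capO[1]$ (from Assumption~\ref{assumption}) rather than first reducing to $\capO[1]=\capR$.
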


\begin{rem}\label{rem_tqa_0_conn_implies_completion_is_too}
If we suppose that $\capO[1] = \capR$, then \cite[1.14]{Harper_Hess} gives an identification
\begin{align}
D_n(\id) \simeq \capO[n]\wedge_{\Sigma_n}\TQ^{\wedge n}
\end{align}
of the layers of the Taylor tower of the identity in $\AlgO$, where the smash products above are understood to be derived (see also \cite[Section 2.7]{Kuhn_Pereira}). Using this identification, it follows from \cite[4.32]{Harper_Hess} that, under the assumptions of Theorem \ref{thm_main_theorem}, the connectivity of $D_n(\id)$ grows without bound as $n$ increases, which in turn implies that the Goodwillie spectral sequence associated to the identity functor converges strongly in the sense of \cite[1.8]{Ching_Harper_derived_Koszul_duality}. Furthermore, by considering the associated $\lim^1$ short exact sequence, one can show that $P_\infty(\id)A\simeq A^\wedge_\TQ$ is $0$-connected. Hence, if $A$ is as in Theorem \ref{thm_main_theorem} and $\pi_0(A) \neq 0$, then $A \not\simeq P_\infty(\id)A$; in other words, it is not the case that $P_\infty(\id)A\simeq A^\wedge_\TQ$ simply because both are weakly equivalent to $A$.
\end{rem}

\noindent
{\bf Relationship to previous work.} It is known \cite[4.3]{Goodwillie_calculus_2} that the identity functor on spaces converges strongly on all 1-connected spaces. Arone-Kankaanrinta \cite[Appendix A]{Arone_Kankaanrinta} are able to determine the behavior of the Taylor tower more generally, showing that for any $0$-connected space $X$, there is a weak equivalence $P_\infty(\id)X \simeq X_{\Omega^\infty\Sigma^\infty}^\wedge$. In the setting of $\capO$-algebras, it follows from \cite[1.6]{Ching_Harper} that the identity functor converges strongly on all $0$-connected $\capO$-algebras. Similarly to Arone-Kankaanrinta, this paper studies the behavior of the Taylor tower of the identity outside of its ``radius of convergence,'' but in the context of $\capO$-algebras. It follows from \cite[2.21]{Kuhn_Pereira} (see also \cite[4.1]{Ogle_Schonsheck}) that $\TQ\simeq P_1(\id) \simeq \hocolim_n\Omega^n\Sigma^n = \Omega^\infty\Sigma^\infty$ as functors on $\AlgO$. Hence, the main result of this paper can be understood as a partial analogue to the above result of Arone-Kankaanrinta.

Our strategy of attack is similar to \cite{Arone_Kankaanrinta} in that we resolve the identity functor by iterates of stabilization, and then analyze the Taylor towers of these iterates. In particular, the bi-tower in \eqref{eqn_the_big_diagram} is the $\capO$-algebra analogue of the main construction used in the proof of \cite[A.1]{Arone_Kankaanrinta}. Our analysis departs from that of Arone-Kankaanrinta in the handling of the columns of this bi-tower.

It is perhaps also worth noting the following. The assumption that $\TQ(A)$ is $0$-connected in Theorem \ref{thm_main_theorem} can be thought of as analogous to assuming, for a connected space $X$, that $\pi_1(\widetilde{\mathbb{Z}}X) \ \iso \ \tilde{H}_1(X;\mathbb{Z}) = 0$ or, equivalently, that $\pi_1(X)$ is a perfect group. Bousfield-Kan show \cite[VII.3.2]{Bousfield_Kan} that, under this assumption, the $\mathbb{Z}$-completion $X_\mathbb{Z}^\wedge$ of $X$ is simply connected, and hence is itself $\mathbb{Z}$-complete by \cite[III.5.4]{Bousfield_Kan}. In this case, a straightforward retract argument as in \cite[I.5.2]{Bousfield_Kan} shows that the natural map $X \to X_\mathbb{Z}^\wedge$ induces an isomorphism on $\mathbb{Z}$-homology, i.e., that $X$ is ``$\mathbb{Z}$-good.'' An analogous result follows from our main theorems, under the additional assumption that $\capO[1] = \capR$. As explained in Remark \ref{rem_tqa_0_conn_implies_completion_is_too}, if $\TQ(A)$ is $0$-connected, then $A_\TQ^\wedge$ is as well, which implies \cite[2.7]{Ching_Harper_derived_Koszul_duality} that $A_\TQ^\wedge$ is $\TQ$-complete. A similar retract argument then shows that, in this case, the natural map $A \to A_\TQ^\wedge$ induces an isomorphism on $\TQ$-homology, i.e., that $A$ is ``$\TQ$-good.''

\vspace{.3em}
\noindent
{\bf Acknowledgements.} The author would like to thank John E. Harper for his advice and mentorship, Duncan Clark for many helpful conversations, and Yu Zhang, Jake Blomquist, and Crichton Ogle for useful discussions. We are also grateful to an anonymous referee who provided a careful reading and helpful suggestions for the improvement of an earlier draft of this paper. The author was supported in part by National Science Foundation grant DMS-1547357 and the Simons Foundation: Collaboration Grants for Mathematicians \#638247.

\section{Outline of the main argument}\label{sec_outline_of_main_argument}
We will now outline the proof of Theorem \ref{thm_main_theorem}. The basic idea, motivated by \cite{Arone_Kankaanrinta}, is that it it is easier to analyze the Taylor towers of iterates $(UQ)^k$ of the stabilization functor $UQ \simeq \Omega^\infty\Sigma^\infty$ than it is to analyze the Taylor tower of the identity directly. In fact, since the Taylor tower construction plays nicely with finite homotopy limits, it is equally advantageous to consider finite homotopy limits of functors of the form $(UQ)^k$. To make this more precise, consider the canonical resolution
\begin{align}\label{eqn_canonical_tq_resolution}
\xymatrix{
\id \ar[r] & UQ \ar@<0.5ex>[r] \ar@<-0.5ex>[r] & (UQ)^2 \ar@<1ex>[r] \ar[r] \ar@<-1ex>[r] & (UQ)^3 \cdots
}
\end{align}
of the identity functor by $\TQ$-homology (see, for instance \cite[3.10]{Ching_Harper_derived_Koszul_duality}) and let $\TQ_n:=\holim_{\Delta^{\leq n}}(UQ)^{\bullet+1}$ where $\Delta^{\leq n}$ denotes the full subcategory of $\Delta$ with no objects above degree $n$. This gives rise to a map of towers of functors
\begin{align}\label{eqn_id_to_tq_n}
\sett{\id} \to \sett{\TQ_n}
\end{align}
in $\AlgO$, where the tower on the left is taken to be constant. The strategy now is to analyze the Taylor tower of the identity by analyzing the Taylor towers of each $\TQ_n$. That is, let $A$ be as in Theorem \ref{thm_main_theorem}, and consider the diagram of the form

\vspace{-1.2em}
\begin{align}\label{eqn_the_big_diagram}
\xymatrix@R=2em{
\vdots\ar[d]&
\vdots\ar[d]&
\vdots\ar[d]&
\null\\
P_2(\TQ_0)A\ar[d]&
P_2(\TQ_1)A\ar[d]\ar[l]&
P_2(\TQ_2)A\ar[d]\ar[l]&
\cdots\ar[l]\\
P_1(\TQ_0)A&
P_1(\TQ_1)A\ar[l]&
P_1(\TQ_2)A\ar[l]&
\cdots\ar[l]\\
\TQ_0A\ar@{.>} ^-{(\#)}[u]&
\TQ_1A\ar[l]\ar@{.>} ^-{(\#)}[u]&
\TQ_2A\ar[l]\ar@{.>} ^-{(\#)}[u]&
\cdots\ar[l]
}
\end{align}

\noindent
in $\AlgO$ obtained by forming Taylor towers above each functor $\TQ_n$ and let $P_\ast(\TQ_\ast)A$ denote the bi-tower in \eqref{eqn_the_big_diagram} formed by the solid arrows above the bottom row. By taking the homotopy limit of $P_\ast(\TQ_\ast)A$ in two different ways, we will obtain the desired weak equivalence $P_\infty(\id)A \simeq A_\TQ^\wedge$.

On one hand, it follows from Proposition \ref{prop_convergence_of_columns} that each of the maps $(\#)$ induce a weak equivalence after applying $\holim$. Hence, taking homotopy limits of each vertical tower in $P_\ast(\TQ_\ast)A$ yields a diagram of the form
\begin{align}
\xymatrix{
\TQ_0(A) & \ar[l] \TQ_1(A) & \TQ_2(A) \ar[l] & \cdots \ar[l]
}
\end{align}
the homotopy limit of which is $A^\wedge_\TQ$. Since taking homotopy limits of $P_\ast(\TQ_\ast)A$ ``vertically'' and then ``horizontally'' calculates $\holim P_\ast(\TQ_\ast)A$, this establishes the following weak equivalence.

\begin{align}\label{eqn_holim_big_is_completion}
\holim P_\ast(\TQ_\ast)A \simeq A_\TQ^\wedge
\end{align}

On the other hand, one can also calculate $\holim P_\ast(\TQ_\ast)A$ by taking the homotopy limit of its diagonal. We analyze this diagonal in Section \ref{sec_analysis_diagonal} and, in particular, show in Proposition \ref{prop_upshot_diagonal} that we have the following weak equivalences.
\begin{align}\label{eqn_holim_big_is_p_infty}
\holim P_\ast(\TQ_\ast)A \simeq \holim\diag P_\ast(\TQ_\ast)A \simeq P_\infty(\id)A
\end{align}
Together, \eqref{eqn_holim_big_is_completion} and \eqref{eqn_holim_big_is_p_infty} prove Theorem \ref{thm_main_theorem}.

\section{Background and preliminaries}\label{sec_background_preliminaries}

In this section, we give a brief review of $\TQ$-homology and completion, and establish the working assumptions used in the rest of the paper.

\begin{defn}
Let $\tau_1\capO$ be the operad defined levelwise by
	\begin{align*}(\tau_1\capO)[r] := 
	\begin{cases}
		\capO[r],& \text{for } r \leq 1,\\
		\hfill \ast,& \text{otherwise}
	\end{cases} 
	\end{align*}
\end{defn}

\noindent
The canonical truncation map $\capO \to \tau_1\capO$ induces a change of operads adjunction
\begin{align}\label{eq:barq_baru}
\xymatrix{
\AlgO \ar@<0.5ex>^-{\bar{Q}}[r] & \Alg_{\tau_1\capO} \ \iso \ \Mod_{\capO[1]} \ar@<0.5ex>^-{\bar{U}}[l]
}
\end{align}
with left adjoint on top, where $\bar{Q}(A):= \tau_1\capO\circ_\capO(A)$ and $\bar{U}$ is the forgetful functor. This is, in fact, a Quillen adjunction (see \cite{Harper_symmetric_spectra} and \cite{Harper_Hess}), and the $\TQ$-homology of an $\capO$-algebra is defined using this adjunction as follows.

\begin{defn}
The $\TQ$-homology of an $\capO$-algebra $A$ is $\TQ(A):= \mathsf{R}\bar{U}\left(\mathsf{L}\bar{Q}(A)\right)$, where $\mathsf{R}$ and $\mathsf{L}$ denote the left and right derived functors, respectively.
\end{defn}

The idea of the $\TQ$-completion of an $\capO$-algebra $A$ is to iterate the $\TQ$-Hurewicz map $A \to \bar{U}\bar{Q}A$ to build a cosimplicial ``$\TQ$-resolution'' of $A$. However, even if $A$ is cofibrant, its $\TQ$-homology $\bar{U}\bar{Q}A$ need not be, and so such a construction would not necessarily be homotopical. In order to build an iterable model of $\TQ$-homology, an additional step is required. Following \cite[3.16]{Harper_Hess}, we first factor the map of operads $\capO \to \tau_1\capO$ as
\begin{align*}
\capO \to J \to \tau_1\capO
\end{align*}
a cofibration followed by a weak equivalence. This induces the following Quillen adjunctions
\begin{align}\label{eq:O_J_tau_adjunctions}
\xymatrix{
	\AlgO \ar^-{Q}@<.5ex>[r] & \AlgJ \ar@<0.5ex>[r] \ar^-{U}@<0.5ex>[l] & \Alg_{\tau_1\capO} \ar@<0.5ex>[l]
}
\end{align}
where $Q(A) := J\circ_\capO(A)$ and $U$ is the forgetful functor. The right-hand adjunction is a Quillen equivalence (see \cite[7.21]{Harper_Hess}) and it follows that, for cofibrant $A$, one has a weak equivalance $\TQ(A) \simeq UQA$. The advantage of this construction is that under appropriate conditions (see Assumption \ref{assumption}) the functor $U \colon \AlgJ \to \AlgO$ preserves cofibrant objects. Hence, $UQA$ gives an iterable point-set model for the $\TQ$-homology of $A$, which can be used to construct its $\TQ$-completion.

\begin{defn}
Let $A$ be an $\capO$-algebra and $LA$ denote a functorial cofibrant replacement of $A$. The $\TQ$-completion of $A$ is defined as 
\begin{align}
\xymatrix{
A_\TQ^\wedge:=\holim_\Delta \big(
 (UQ)LA \ar@<0.5ex>[r] \ar@<-0.5ex>[r] & (UQ)^2LA \ar@<1ex>[r] \ar[r] \ar@<-1ex>[r] & (UQ)^3LA \cdots	
}
\big)
\end{align}
\end{defn}

Lastly, we introduce the following standing assumption. The connectivity conditions on $\capO$ and $\capR$ guarantee that the results of \cite{Ching_Harper} used throughout this paper apply, while the cofibrancy assumption on $\capO$ implies \cite[5.49]{Harper_Hess} that the forgetful functor $\AlgJ \to \AlgO$ sends cofibrant objects to cofibrant objects.

\begin{assumption}\label{assumption}
In this paper, $\capO$ will denote a reduced operad in the category $(\ModR, \wedge, \capR)$ of $\capR$-modules (see, e.g., \cite{Hovey_Shipley_Smith}, \cite{Schwede_book_project}, or \cite{Shipley_commutative_ring_spectra}). We assume that $\capR$ and each $\capO[n]$ is $(-1)$-connected and, furthermore, that $\capO$ satisfies the following cofibrancy condition, which appears also in \cite[2.1]{Ching_Harper_derived_Koszul_duality}. Consider the unit map $I \to \capO$; we assume, for each $r \geq 0$, that $I[r] \to \capO[r]$ is a flat stable cofibration (see \cite[7.7]{Harper_Hess}) between flat stable cofibrant objects in $\ModR$. Unless stated otherwise, we consider $\AlgO$ with the positive flat stable model structure \cite[7.14]{Harper_Hess} and assume all homotopy groups are derived \cite{Schwede_homotopy_groups, Schwede_book_project}.
\end{assumption}

\section{Functor calculus in $\AlgO$}

Because the proofs given in Sections \ref{sec_analysis_diagonal}, \ref{sec_analysis_columns}, and \ref{sec_big_proof} work so directly with the construction of the Taylor tower, we review, in this section, the necessary background of this construction. It is essentially a recapitulation of \cite{Pereira_general_context}, with minor changes, and the expert can safely skip this section. Throughout the remainder of this paper, $\capM$ and $\capN$ will denote arbitrary reduced operads in $\ModR$, and we choose a fixed functorial factorization on each of $\AlgM$ and $\AlgN$.

Many of the functors we consider in later sections are not on-the-nose homotopy functors, but do become so after precomposition with a functorial cofibrant replacement. Hence, we have the following.

\begin{defn}\label{defn_left_homotopical}
Call a functor $F \colon \AlgM \to \AlgN$ \emph{left homotopical} if it preserves weak equivalences between cofibrant objects.
\end{defn}

\begin{defn}
Given a set $W$, let $\mathcal{P}(W)$ denote the power set of $W$ and $\mathcal{P}_0(W)$ the set of all nonempty subsets of $W$. Note that $\mathcal{P}(W)$ and $\mathcal{P}_0(W)$ are naturally partially ordered by inclusion and, hence, are categories.
\end{defn}
\begin{rem}
If $W$ is finite, then the simplicial nerve of $\mathcal{P}(W)$ has finitely many nondegenerate simplices, i.e., $\mathcal{P}(W)$ is ``very small'' in the language of \cite{Dwyer_Spalinski}.
\end{rem}
\begin{defn}\label{defn_goodwillie_n_cube}
Let $W=\sett{1,2,\ldots, n}$. Given an algebra $A$ in $\AlgM$, let $\mathcal{X}^n(A)$ denote the $\mathcal{P}(W)$-shaped diagram in $\Alg$ obtained via left Kan extension of the diagram
\begin{align}\label{eqn_def_cube_by_kan_extension}
\xymatrix@R=1em@C=1em{
&&A \ar[dll] \ar[dl] \ar[dr] \ar[drr]&&\\
C(A) & C(A) & \cdots & C(A) & C(A)
}
\end{align}
indexed on subsets of $W$ of cardinality less than or equal to one. Here, $C(A)$ denotes the cone on $A$ obtained by functorial factorization of the map from $A$ to the terminal object as a cofibration followed by an acyclic fibration. 
\end{defn}

\begin{rem}
One could equivalently define $\mathcal{X}^n(A)$ as the $n$-cube in $\AlgM$ obtained by taking iterated pushouts of the maps in \eqref{eqn_def_cube_by_kan_extension}. In particular, $\mathcal{X}^n(A)$ is a pushout cube in the language of \cite{Goodwillie_calculus_2}. Furthermore, if $A$ is cofibrant, then each of the maps in \eqref{eqn_def_cube_by_kan_extension} is a cofibration and, hence, the $n$-cube $\mathcal{X}^n(A)$ is strongly cocartesian. 
\end{rem}

\begin{rem}
As defined above, $C(A)$ depends on our fixed choice of functorial factorization. If needed, one could instead define the cone on $A$ using the pointed simplicial model structure \cite[6.1]{Harper_Hess} on $\AlgM$, which would result in a weakly equivalent model of $C(A)$ and would not depend on any such choice. This approach is taken in \cite{Pereira_general_context} to obtain certain universality results; in particular, see \cite[4.15]{Pereira_general_context}. In the interest of keeping this paper more self-contained, we avoid appealing to this extra simplicial structure.
\end{rem}

We now define the $n^{th}$ Taylor approximation of a functor $F \colon \AlgM \to \AlgN$ as in \cite{Pereira_general_context}, except that we define $\bar{T}_n$ and $\bar{P}_n$ for all such functors, rather than only homotopy functors that take values in cofibrant objects. The reason for this is that we would like to apply these constructions to, e.g., left homotopical functors, which can be made into homotopy functors with values in cofibrant objects via appropriate replacements.

\begin{defn}\label{defn_bar_T_n_F}
Given a functor $F \colon \AlgM\ \to \AlgN$, define $\bar{T}_nF$ objectwise by functorial factorization
\begin{align}\label{eqn_def_of_T_n}
F(A) \xrightarrow{(\bar{t}_nF)(A)} \bar{T}_nF(A) \xrightarrow{\sim} \holim_{\mathcal{P}_0(W)} F\big( \mathcal {X}^{n+1}(A)\big)
\end{align}
as a cofibration followed by an acyclic fibration.
\end{defn}

The $\bar{T}_nF$ construction of Definition \ref{defn_bar_T_n_F} takes the place of Goodwillie's $T_nF$ in \cite{Goodwillie_calculus_3}. Accordingly, $\bar{P}_nF$ is defined by the analogous sequential homotopy colimit.

\begin{defn}\label{defn_bar_P_n_F}
Given a functor $F \colon \AlgM \to \AlgN$, define $\bar{P}_n(F)A$ as the homotopy colimit of the following diagram.
\begin{align}
\bar{T}_n(F)A \xrightarrow{(\bar{t}_nF)(A)} \bar{T}_n\big(\bar{T}_n(F)\big)A \xrightarrow{(\bar{t}_n\bar{T}_nF)(A)} (\bar{T}_n^3F)A \to \cdots
\end{align}
\end{defn}

\begin{rem}
It is worth noting that, even if $F$ is a homotopy functor, the constructions $\bar{T}_nF$ and $\bar{P}_nF$ are \emph{not} homotopical. This is because, in general, the construction $\mathcal{X}^n$ of Definition \ref{defn_goodwillie_n_cube} is only homotopical on cofibrant inputs. However, to make these constructions homotopical, we only need to add in appropriate cofibrant replacement.
\end{rem}

\begin{defn}
Define $P_n(F)$ as $P_n(F):= \bar{P}_n(F)\circ L$, where $L$ denotes our fixed functorial cofibrant replacement in $\AlgM$.
\end{defn}

The following proposition says that if $F$ is a left homotopical functor, then, up to weak equivalence, there is no difference between working with $P_n(F)$ and $\bar{P}_n(F)$.

\begin{lem}\label{lem_T_n_F_left_homotopical}
If $F$ is left homotopical, then the natural map $P_n(F)A \xrightarrow{\sim} \bar{P}_n(F)A$ is a weak equivalence on all cofibrant objects $A$.
\end{lem}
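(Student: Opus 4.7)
The plan is to prove the stronger statement that $\bar{P}_n(F)$ is itself left homotopical whenever $F$ is, from which the conclusion is immediate. Unwinding $P_n(F) = \bar{P}_n(F) \circ L$, the natural comparison map $P_n(F)A \to \bar{P}_n(F)A$ is $\bar{P}_n(F)$ applied to the cofibrant replacement $LA \to A$, which for cofibrant $A$ is a weak equivalence between cofibrant objects; hence left homotopicality of $\bar{P}_n(F)$ gives the desired conclusion.

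Since $\bar{P}_n(F)A$ is, by Definition \ref{defn_bar_P_n_F}, a sequential homotopy colimit of the iterates $\bar{T}_n^k(F)A$, and sequential hocolim preserves levelwise weak equivalences, it suffices by induction on $k$ to prove that $\bar{T}_n G$ is left homotopical whenever $G$ is. For the inductive step, let $f \colon A \to A'$ be a weak equivalence between cofibrant objects. Because $A$ and $A'$ are cofibrant, Definition \ref{defn_goodwillie_n_cube} guarantees that every vertex of $\mathcal{X}^{n+1}(A)$ and $\mathcal{X}^{n+1}(A')$ is cofibrant, each being an iterated pushout along cofibrations of the form $A \to C(A)$ (resp.\ $A' \to C(A')$) between cofibrant objects. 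The standard homotopy invariance of pushouts along cofibrations between cofibrant objects, applied vertex by vertex, then shows that $\mathcal{X}^{n+1}(f)$ is an objectwise weak equivalence between objectwise cofibrant $\mathcal{P}(W)$-diagrams.

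Applying $G$ vertex-by-vertex to the punctured cubes then yields, by the left homotopicality of $G$, an objectwise weak equivalence of $\mathcal{P}_0(W)$-diagrams; taking the (derived) $\holim_{\mathcal{P}_0(W)}$ preserves this. Two-out-of-three applied to the naturality square formed by the defining weak equivalences $\bar{T}_n G(A) \xrightarrow{\sim} \holim_{\mathcal{P}_0(W)} G\mathcal{X}^{n+1}(A)$ and $\bar{T}_n G(A') \xrightarrow{\sim} \holim_{\mathcal{P}_0(W)} G\mathcal{X}^{n+1}(A')$ (see Definition \ref{defn_bar_T_n_F}), together with the right-hand vertical weak equivalence just established, forces $\bar{T}_n G(A) \to \bar{T}_n G(A')$ to be a weak equivalence, completing the induction.

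The main obstacle I anticipate is the cofibrancy-bookkeeping: at each step of the induction one must verify that $\mathcal{X}^{n+1}$ applied to the relevant input has cofibrant vertices formed by iterated pushouts along cofibrations between cofibrant objects, so that the cube lemma applies and the ambient $\holim_{\mathcal{P}_0(W)}$ behaves homotopically on what is produced. This is where the assumption that $A$ is cofibrant is used essentially, and it is the technical core of the argument.
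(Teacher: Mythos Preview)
Your argument is correct and follows the same approach as the paper: show that $\bar{P}_n(F)$ is left homotopical by establishing inductively that each $\bar{T}_n^k(F)$ is, then apply this to the cofibrant replacement map $LA \to A$. The paper's proof simply asserts the key step (``it follows by construction that if $F$ is left homotopical, the functors $\bar{T}_n^k(F)$ are also left homotopical''), whereas you have spelled out the details of why this holds.
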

\begin{proof}
It follows by construction that if $F$ is left homotopical, the functors $\bar{T}_n^k(F)$ are also left homotopical, for $k \geq 1$. Hence, $\hocolim_i\bar{T}_n^k(F) = \bar{P}_n(F)$ is as well. So, if $LA \xrightarrow{\sim} A$ is the functorial cofibrant replacement of $A$, we have that
\begin{align}
P_n(F)A = \bar{P}_n(F)LA \xrightarrow{\sim} \bar{P}_n(F)A
\end{align}
is a weak equivalence.
\end{proof}

The following two propositions follow from the construction of $\mathcal{X}^n(A)$ and the fact \cite[3.8(a)]{Ching_Harper} that homotopy pushouts of $k$-connected maps are $k$-connected. They mirror the fact that, in spaces, joining with a nonempty set increases connectivity of maps and preserves strongly cocartesian cubes (see, for instance, the proof of \cite[1.4]{Goodwillie_calculus_2}). Note that for a cube $\mathcal{X}$ indexed on $\sett{1,2,\ldots, n}$ and any subset $U \subseteq \sett{1,2,\ldots, n}$, we denote by $\mathcal{X}_U$ the object of $\mathcal{X}$ at index $U$.

\begin{prop}\label{prop_join_increase_connectivity}
Given a $k$-connected map $A \to B$ between cofibrant objects in $\AlgM$ and any nonempty subset $U \subseteq \sett{1,2,\ldots, n}$, the induced map $\mathcal{X}^n_U(A) \to \mathcal{X}^n_U(B)$ is $(k+1)$-connected.
\end{prop}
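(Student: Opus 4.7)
The plan is to induct on $m := |U|$, using strong cocartesianness of $\mathcal{X}^n(A)$ and $\mathcal{X}^n(B)$ (valid since $A$ and $B$ are cofibrant, per the remark following Definition \ref{defn_goodwillie_n_cube}) together with the cobase-change connectivity estimate \cite[3.8(a)]{Ching_Harper}. The base case $m = 1$ is immediate: both $\mathcal{X}^n_U(A) = C(A)$ and $\mathcal{X}^n_U(B) = C(B)$ are weakly contractible, so the induced map is $\infty$-connected.

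For the inductive step with $m \geq 2$, I fix $i \in U$ and set $U' := U \setminus \{i\}$, so $|U'| = m - 1 \geq 1$. Strong cocartesianness of the 2-face on $\emptyset, \{i\}, U', U$ yields $\mathcal{X}^n_U(A) \simeq C(A) \sqcup^h_A \mathcal{X}^n_{U'}(A)$, and analogously for $B$. I then factor the map of interest as
$$
\mathcal{X}^n_U(A) \xrightarrow{(i)} C(A) \sqcup^h_A \mathcal{X}^n_{U'}(B) \xrightarrow{(ii)} \mathcal{X}^n_U(B).
$$
Map $(i)$ is the cobase change along the cofibration $A \to C(A)$ of the inductively $(k+1)$-connected map $\mathcal{X}^n_{U'}(A) \to \mathcal{X}^n_{U'}(B)$, so it is $(k+1)$-connected by \cite[3.8(a)]{Ching_Harper}. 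For map $(ii)$, I factor further through $C(B) \sqcup^h_A \mathcal{X}^n_{U'}(B)$: the first piece is cobase change of the weak equivalence $C(A) \to C(B)$, hence itself a weak equivalence, while the second piece, via associativity of homotopy pushouts, is the cobase change along $B \to \mathcal{X}^n_{U'}(B)$ of the collapse map $C(B) \sqcup^h_A B \to C(B)$. Since $C(B)$ is contractible, this collapse is weakly equivalent to the map $B/A \to \ast$ from the homotopy cofiber of $A \to B$; since $A \to B$ is $k$-connected, $B/A$ is $k$-connected, making the collapse $(k+1)$-connected. A final application of \cite[3.8(a)]{Ching_Harper} then gives $(k+1)$-connectivity of map $(ii)$, and composing with $(i)$ completes the induction.

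The main obstacle will be the connectivity analysis of the collapse map $C(B) \sqcup^h_A B \to C(B)$: the cofiber-sequence argument is transparent in a stable setting, but $\Alg$ is not stable. I would therefore carry out the key connectivity computation at the level of the underlying spectra (where connectivity is in any case measured) and transport the conclusion back, using that the forgetful functor preserves and reflects connectivity of maps.
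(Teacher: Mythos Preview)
Your inductive argument is correct and is precisely the kind of argument the paper has in mind when it says the proposition ``follows from the construction of $\mathcal{X}^n(A)$ and the fact \cite[3.8(a)]{Ching_Harper} that homotopy pushouts of $k$-connected maps are $k$-connected.'' The paper gives no further detail, so your write-up is a faithful expansion of that sketch.

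The only issue is your final paragraph. The ``obstacle'' you flag is not actually an obstacle, and the workaround you propose would not work. Concretely: the homotopy cofiber $B/A$ here is the $\Alg$-cofiber $*\sqcup^h_A B$, and its $k$-connectivity follows immediately from \cite[3.8(a)]{Ching_Harper} itself, with no stability needed. Indeed, in the homotopy pushout square
\[
\xymatrix{
A \ar[r]^-{k\text{-conn}} \ar[d] & B \ar[d] \\
* \ar[r] & *\sqcup^h_A B
}
\]
the bottom map is the cobase change of the $k$-connected top map, hence is $k$-connected; but a map out of $*$ is $k$-connected exactly when its target is $k$-connected, so $*\sqcup^h_A B$ is $k$-connected and the collapse $*\sqcup^h_A B \to *$ is $(k+1)$-connected, as you wanted. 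No long exact sequence or stable reasoning is required. By contrast, your proposed fix---compute at the level of underlying spectra and transport back---does not go through as stated: while the forgetful functor $\Alg \to \ModR$ preserves and reflects connectivity of maps, it does \emph{not} preserve homotopy pushouts, so the underlying spectrum of $*\sqcup^h_A B$ is not the spectrum-level cofiber of $A\to B$. Simply delete the last paragraph and justify the connectivity of $B/A$ via the cobase-change square above; the rest of your proof stands.
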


\begin{prop}\label{prop_join_preserves_strongly_cocartesian}
If $\mathcal{Y}$ is an objectwise cofibrant, strongly cocartesian cube in $\AlgM$, then for any fixed $U \subseteq \sett{1,2,\ldots,n}$, the cube $\mathcal{X}_U^n(\mathcal{Y})$ is strongly cocartesian.
\end{prop}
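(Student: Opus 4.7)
The plan is to proceed by induction on the cardinality $|U|$. Recall first that $\mathcal{X}^n_U(A)$ is the left Kan extension along $\mathcal{P}_{\leq 1}(U) \hookrightarrow \mathcal{P}(U)$ of the spokes diagram in \eqref{eqn_def_cube_by_kan_extension}; explicitly, it is the iterated pushout of $|U|$ copies of the cone inclusion $A \rightarrowtail C(A)$, all glued along $A$. Choosing any $i \in U$ and writing $U' := U \setminus \sett{i}$, this description gives a natural isomorphism
\begin{align}
\mathcal{X}^n_U(A) \iso C(A) \sqcup_A \mathcal{X}^n_{U'}(A)
\end{align}
in $\Alg$, which will be our main tool for recursion.

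The base case $|U|=0$ is immediate, as $\mathcal{X}^n_\emptyset(\mathcal{Y}) = \mathcal{Y}$ is strongly cocartesian by hypothesis. For the inductive step, assuming the claim for $U'$, apply the formula above objectwise to $\mathcal{Y}$ to obtain
\begin{align}
\mathcal{X}^n_U(\mathcal{Y}) \iso C(\mathcal{Y}) \sqcup_\mathcal{Y} \mathcal{X}^n_{U'}(\mathcal{Y}),
\end{align}
where $C(\mathcal{Y})$ and $\mathcal{X}^n_{U'}(\mathcal{Y})$ are obtained by applying $C(-)$ and $\mathcal{X}^n_{U'}(-)$ objectwise. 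Since a cube is strongly cocartesian if and only if each of its $2$-faces is a homotopy pushout, the problem reduces to the following general fact: the pushout of two strongly cocartesian cubes over a common strongly cocartesian cube along objectwise cofibrations is again strongly cocartesian. This follows from the commutation of pushouts with pushouts (Fubini for colimits): each $2$-face of the resulting cube is naturally isomorphic to the pushout of the corresponding $2$-faces of the three input cubes, and the pushout of three homotopy pushout squares along objectwise cofibrations between cofibrant objects is again a homotopy pushout square by left properness of the positive flat stable model structure.

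The main obstacle will be tracking cofibrancy carefully through the induction, so that the strict pushout in the inductive formula genuinely computes the corresponding homotopy pushout on each vertex and each $2$-face. Concretely, one must verify that $\mathcal{X}^n_{U'}(\mathcal{Y})$ and $C(\mathcal{Y})$ remain objectwise cofibrant, and that the structural maps $\mathcal{Y} \to \mathcal{X}^n_{U'}(\mathcal{Y})$ and $\mathcal{Y} \to C(\mathcal{Y})$ are objectwise cofibrations. Both reduce to the fact that $A \rightarrowtail C(A)$ is a cofibration between cofibrant objects whenever $A$ is cofibrant, combined with the fact that pushouts along cofibrations preserve cofibrations and cofibrant objects; these are precisely the inputs that make the iterated pushout defining $\mathcal{X}^n_U$ a model for the corresponding iterated homotopy pushout in the first place, as already used in the argument for Proposition \ref{prop_join_increase_connectivity}.
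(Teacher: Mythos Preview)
Your argument is correct and is precisely the kind of unwinding the paper has in mind: the paper does not give a proof but simply points to the iterated-pushout description of $\mathcal{X}^n_U$ and to the analogous fact for joins in spaces in the proof of \cite[1.4]{Goodwillie_calculus_2}, which is exactly the induction on $|U|$ you carry out. One small point you use implicitly but do not state: in your Fubini step you need all three input cubes to be strongly cocartesian, so in particular $C(\mathcal{Y})$ must be; this holds because $C(\mathcal{Y})$ is objectwise weakly contractible (or, equivalently, because $C$ is a homotopy-colimit construction and hence preserves homotopy pushout squares of cofibrant objects).
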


\section{Analysis of the diagonal of $P_\ast(\TQ_\ast)A$}\label{sec_analysis_diagonal}

The purpose of this section is to prove Proposition \ref{prop_upshot_diagonal}. After observing that the analogue of \cite[1.6]{Goodwillie_calculus_3} holds in our setting, the key technical maneuver is Proposition \ref{prop_id_tq_n_1_agree_order_n}, which is a consequence of \cite[7.1]{Blomquist_iterated_delooping}.

The following definition is essentially \cite[1.2]{Goodwillie_calculus_3}, but restricted to cofibrant objects. Since we make frequent use of this definition, we have included it for the sake of completeness.

\begin{defn}
A map $F \to G$ between functors $\AlgM \to \AlgN$ is said to satisfy $O_n'(c, \kappa)$ if for every cofibrant $A \in \AlgM$ such that $A \to \ast$ is $k$-connected, with $k \geq \kappa$, the map $F(A) \to G(A)$ is $(-c+(n+1)k)$-connected. We say that $F$ and $G$ \emph{agree to order $n$ on cofibrant objects} if this holds for some constants $c$ and $\kappa$.
\end{defn}

\begin{prop}\label{prop_if_functors_agree}
If a map $F \to G$ between functors $\AlgM \to \AlgN$ satisfies $O_n'(c, \kappa)$, then
\begin{enumerate}[label=(\roman*), itemsep=2pt]
\item $\bar{T}_n^i(F) \to \bar{T}_n^i(G)$ satisfies $O_n'(c-i, \kappa -i)$
\item $\bar{P}_n(F)A \xrightarrow{\sim} \bar{P}_n(G)A$ is a weak equivalence for $(-1)$-connected, cofibrant $A$ 
\end{enumerate}
\end{prop}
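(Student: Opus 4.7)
The plan is to prove (i) by induction on $i$, starting from the defining homotopy-limit presentation of $\bar{T}_n$, and then to deduce (ii) by passing to the sequential homotopy colimit defining $\bar{P}_n$ and observing that the connectivity bound from (i) grows without bound in $i$.

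For (i), the base case $i=0$ is the hypothesis. For the inductive step, let $A$ be a cofibrant $\capO$-algebra such that $A \to \ast$ is $k$-connected for some $k \geq \kappa-1$. Up to weak equivalence, Definition \ref{defn_bar_T_n_F} gives, with $W = \{1, \ldots, n+1\}$,
\begin{equation*}
\bar{T}_n(F)A \ \simeq \ \holim_{U \in \mathcal{P}_0(W)} F\bigl( \mathcal{X}^{n+1}_U(A) \bigr),
\end{equation*}
and similarly for $G$. Applying Proposition \ref{prop_join_increase_connectivity} to the $k$-connected map $A \to \ast$ (and using that $\mathcal{X}^{n+1}_U(\ast) = \ast$), I see that for every nonempty $U \subseteq W$ the vertex $\mathcal{X}^{n+1}_U(A)$ is at least $(k+1)$-connected, and hence at least $\kappa$-connected. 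The hypothesis $O_n'(c,\kappa)$ therefore gives that
\begin{equation*}
F\bigl(\mathcal{X}^{n+1}_U(A)\bigr) \to G\bigl(\mathcal{X}^{n+1}_U(A)\bigr)
\end{equation*}
is at least $\bigl(-c + (n+1)(k+1)\bigr)$-connected for each nonempty $U$. The nerve of $\mathcal{P}_0(W)$ has dimension $n$, so the homotopy limit over this shape drops connectivity by at most $n$, whence $\bar{T}_n(F)A \to \bar{T}_n(G)A$ is at least
\begin{equation*}
-c + (n+1)(k+1) - n \ = \ -(c-1) + (n+1)k
\end{equation*}
connected; this is precisely the condition $O_n'(c-1, \kappa-1)$. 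Iterating yields $O_n'(c-i, \kappa-i)$ for $\bar{T}_n^i(F) \to \bar{T}_n^i(G)$.

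For (ii), let $A$ be a $(-1)$-connected cofibrant $\capO$-algebra, so $k = -1$ in the hypothesis of (i). By Definition \ref{defn_bar_P_n_F}, $\bar{P}_n(F)A$ and $\bar{P}_n(G)A$ are the sequential homotopy colimits of the towers $\bar{T}_n^i(F)A$ and $\bar{T}_n^i(G)A$, respectively. For each $i \geq \kappa+1$ we have $-1 \geq \kappa - i$, so (i) applies and gives that $\bar{T}_n^i(F)A \to \bar{T}_n^i(G)A$ is at least $\bigl(-(c-i) - (n+1)\bigr) = (i - c - n - 1)$-connected. This connectivity tends to $\infty$ as $i \to \infty$, so the induced map on sequential homotopy colimits is a weak equivalence.

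The main technical obstacle is justifying the connectivity loss bound for $\holim_{\mathcal{P}_0(W)}$, namely that a homotopy limit over a finite poset whose nerve has dimension $n$ drops connectivity by at most $n$. This is a standard consequence of the cosimplicial/totalization description of such homotopy limits and the associated Bousfield--Kan spectral sequence, but must be pinned down carefully in the positive flat stable model structure on $\Alg$ assumed throughout. A secondary but routine point is that sequential homotopy colimits of maps with connectivity tending to infinity yield weak equivalences in this setting.
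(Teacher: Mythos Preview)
Your proposal is correct and follows essentially the same route as the paper, which simply cites \cite[1.6]{Goodwillie_calculus_3} together with Propositions~\ref{prop_join_increase_connectivity} and~\ref{prop_join_preserves_strongly_cocartesian}; you have written out the argument behind that citation. Note that your proof only uses Proposition~\ref{prop_join_increase_connectivity} (the $+1$ connectivity gain on each vertex of the cube) and the dimension bound on the nerve of $\mathcal{P}_0(W)$, which is all that is actually required here---Proposition~\ref{prop_join_preserves_strongly_cocartesian} is not needed for this statement, despite being listed in the paper's one-line proof.
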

\begin{proof}
Using Propositions \ref{prop_join_increase_connectivity} and \ref{prop_join_preserves_strongly_cocartesian}, this is proven as in \cite[1.6]{Goodwillie_calculus_3}.	
\end{proof}

We are now in a position to prove the key technical result of this section, which states that the identity functor agrees to order $n$ on cofibrant objects with the functor $\TQ_{n-1}$. To keep this section appropriately brief, we refer the reader to \cite[Section 7]{Schonsheck_fibration_theorems} for the relevant background on cubical diagrams used below.

\begin{prop}\label{prop_id_tq_n_1_agree_order_n}
The natural transformation $\id \to \TQ_{n-1}$ of functors $\AlgO \to \AlgO$ satisfies $O_n'(0,1)$ for all $n \geq 1$.
\end{prop}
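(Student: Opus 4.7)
The plan is to deduce this directly from the Blomquist connectivity estimate \cite[7.1]{Blomquist_iterated_delooping} for the cosimplicial TQ-resolution, essentially by unpacking the definition of $O_n'(0,1)$. Concretely, what needs to be shown is that, for every cofibrant $\capO$-algebra $A$ whose terminal map is $k$-connected with $k \geq 1$, the natural comparison map
\[
A \to \TQ_{n-1}(A) = \holim_{\Delta^{\leq n-1}}(UQ)^{\bullet+1}(A)
\]
is $((n+1)k)$-connected.

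First, I would observe that the natural transformation $\id \to \TQ_{n-1}$ is, by the very definition of $\TQ_{n-1}$, the partial-totalization map from $A$ into the $(n-1)$-truncation of its canonical TQ-cobar resolution \eqref{eqn_canonical_tq_resolution}. Then I would invoke \cite[7.1]{Blomquist_iterated_delooping}, whose main content is precisely a connectivity estimate for exactly this comparison map, expressed in terms of the connectivity of $A$ and the degree of the totalization. Substituting the truncation degree $n-1$ and the connectivity $k \geq 1$ of $A$ into that bound reproduces $(n+1)k$, which is the connectivity required by $O_n'(0,1)$.

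The main obstacle is thus really just bookkeeping: one has to confirm that the indexing conventions used in \cite{Blomquist_iterated_delooping} (for ``$k$-connected'' and for the partial totalization $\holim_{\Delta^{\leq n-1}}$) align with those in force here, so that Blomquist's estimate transcribes cleanly as $((n+1)k)$-connected and is not off by a constant that would shift $c$ away from $0$. Once the conventions are matched, verification of $O_n'(0,1)$ is immediate from the bound, and no further calculation is needed.
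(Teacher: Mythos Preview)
Your proposal is correct and follows essentially the same route as the paper: both deduce the connectivity of $A \to \TQ_{n-1}(A)$ from \cite[7.1]{Blomquist_iterated_delooping}. The only cosmetic difference is that the paper unpacks the citation slightly---it phrases the argument in terms of the coface $n$-cubes of the cosimplicial $\TQ$-resolution and applies \cite[7.1]{Blomquist_iterated_delooping} iteratively to obtain the cartesianness estimates $2(k+1),\,3(k+1),\ldots$ (for $A$ that is $k$-connected, $k\geq 0$), whereas you invoke the resulting bound on $A\to\holim_{\Delta^{\leq n-1}}$ directly; after the standard identification of the punctured coface cube's homotopy limit with the partial totalization, these are the same computation.
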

\begin{proof}
The proposition essentially follows from \cite[7.1]{Blomquist_iterated_delooping}. For instance, if $A \in \AlgO$ is cofibrant and $k$-connected with $k \geq 0$, consider the coface cubes 
\begin{align}\label{eqn_coface_cubes}
\xymatrix@C=.6em@R=.7em{
A \ar[rr] && UQA && A \ar[rr] \ar[dd] && UQA \ar[dd]&&A \ar[dd] \ar[dr] \ar[rr] && UQA \ar[dd]|\hole \ar[dr]\\
&&&&&&&&& UQA \ar[dd] \ar[rr] && (UQ)^2A \ar[dd]\\
&&&&UQA \ar[rr] && (UQ)^2A&&UQA\ar[rr]|\hole \ar[dr]&& (UQ)^2A \ar[dr]\\
&&&&&&&&& (UQ)^2A \ar[rr] && (UQ)^3A
}
\end{align}
arising from the cosimplicial $\TQ$-resolution of $A$. As a $0$-cube, $A$ is $(k+1)$-cartesian; repeated application of \cite[7.1]{Blomquist_iterated_delooping} then shows that the cubes in \eqref{eqn_coface_cubes} are, respectively, $2(k+1)$-cartesian, $3(k+1)$-cartesian, and $4(k+1)$-cartesian. Hence, the map $A \to \TQ_0(A)$ is $2(k+1)$-connected; the map $A \to \TQ_1(A)$ is $3(k+1)$-connected; and the map $A \to \TQ_2(A)$ is $4(k+1)$-connected. To finish the proof, continue inductively.
\end{proof}

The main result of this section, below, is now a straightforward consequence of Proposition \ref{prop_id_tq_n_1_agree_order_n}.

\begin{prop}\label{prop_upshot_diagonal}
Given a $(-1)$-connected cofibrant $\capO$-algebra $A$, there is a natural weak equivalence of the form $\holim P_\ast(\TQ_\ast)A \simeq P_\infty(\id)A$.
\end{prop}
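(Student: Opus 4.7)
The plan is to extract the equivalence from the behavior of $P_\ast(\TQ_\ast)A$ along a cofinal diagonal subdiagram of the bi-tower, and to identify the resulting sequence with the Taylor tower of the identity. First, I would observe that the subdiagram $\{P_{n+1}(\TQ_n)A\}_{n \geq 0}$, together with the structural maps it inherits from the bi-tower, is cofinal (for the purpose of computing $\holim$) in the indexing category: for any pair $(m, n)$, the choice $k = \max(m-1, n)$ satisfies $k+1 \geq m$ and $k \geq n$, and this reduces in the standard way to contractibility of the relevant undercategories. Applying the usual cofinality result for homotopy limits of bi-towers then gives
\[
\holim P_\ast(\TQ_\ast)A \ \simeq\ \holim_n P_{n+1}(\TQ_n)A.
\]

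Next, for each $n \geq 0$, applying Proposition \ref{prop_id_tq_n_1_agree_order_n} with index $n+1$ tells us that the natural transformation $\id \to \TQ_n$ satisfies $O'_{n+1}(0,1)$. Combining this with Proposition \ref{prop_if_functors_agree}(ii) produces, on every $(-1)$-connected cofibrant $\capO$-algebra $A$, a natural weak equivalence $\bar{P}_{n+1}(\id)A \xrightarrow{\sim} \bar{P}_{n+1}(\TQ_n)A$. Since $\id$ and each $\TQ_n$ are left homotopical functors (the former trivially, the latter as a finite homotopy limit of iterates of the stabilization functor $UQ$, each of which is left homotopical), Lemma \ref{lem_T_n_F_left_homotopical} then converts this into a natural weak equivalence $P_{n+1}(\id)A \simeq P_{n+1}(\TQ_n)A$ for every $(-1)$-connected cofibrant $A$.

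Finally, naturality of the $\bar{T}_{n+1}$ construction in its functor argument, applied to the commuting square relating $\id \to \TQ_{n+1}$, $\id \to \TQ_n$, and the structure map $\TQ_{n+1} \to \TQ_n$, ensures that these pointwise equivalences assemble into a weak equivalence of towers $\{P_{n+1}(\id)A\}_n \simeq \{P_{n+1}(\TQ_n)A\}_n$. Passing to the homotopy limit then gives
\[
\holim_n P_{n+1}(\TQ_n)A \ \simeq\ \holim_n P_{n+1}(\id)A \ =\ P_\infty(\id)A,
\]
which, combined with the cofinality equivalence above, proves the proposition. The step I expect to require the most care is this last one: one must check that the equivalences $P_{n+1}(\id)A \simeq P_{n+1}(\TQ_n)A$ are compatible with the bi-tower's horizontal structure maps (not merely pointwise), so that the diagonal sequence really is obtained from the tower $\{P_{n+1}(\id)A\}_n$ by a map of towers and the comparison of homotopy limits is legitimate.
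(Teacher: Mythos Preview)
Your proof is correct and follows essentially the same route as the paper: restrict to the diagonal by left cofinality, then use Proposition~\ref{prop_id_tq_n_1_agree_order_n} together with Proposition~\ref{prop_if_functors_agree}(ii) to identify the diagonal tower with $\{P_{n+1}(\id)A\}$. Your added care about compatibility of the levelwise equivalences with the tower structure maps, and about converting $\bar{P}_n$ to $P_n$ via Lemma~\ref{lem_T_n_F_left_homotopical}, fills in details the paper leaves implicit.
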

\begin{proof}
It follows from Propositions \ref{prop_if_functors_agree} and Proposition \ref{prop_id_tq_n_1_agree_order_n} that there is a natural weak equivalence between the diagonal of $P_\ast(\TQ_\ast)A$ and the tower $\sett{P_n(\id)}$. Since the inclusion of the diagonal into a bi-tower is left cofinal (i.e., homotopy initial), it follows that we have weak equivalences
\begin{align}
\holim P_\ast(\TQ_\ast)A \simeq \holim\diag P_\ast(\TQ_\ast)A \simeq \holim_nP_n(\id)A =:P_\infty(\id)A
\end{align}
and this completes the proof.
\end{proof}

\section{Analysis of the columns of $P_\ast(\TQ_\ast)A$}\label{sec_analysis_columns}

The purpose of this section is to prove Proposition \ref{prop_convergence_of_columns}. The key techincal result used is Proposition \ref{prop_technical_heart}, the proof of which we postpone to Section \ref{sec_big_proof} in order to clarify the overall argument. In the results below, we make use the fact that, by construction, the functors $Q$, $(UQ)^mU$, and $\TQ_m$ are left homotopical for all $m \geq 0$.

As in the previous section, we give the following definition and proposition for the sake of completeness; they are the analogues of \cite[4.1]{Goodwillie_calculus_2} and \cite[1.4]{Goodwillie_calculus_3}, respectively.

\begin{defn}
Given $F\colon\AlgM \to \AlgN$, we say $F$ is \emph{cofibrantly stably $n$-excisive} or satisfies \emph{cofibrant stable $n^{th}$ order excision} if the following is true for some numbers $c$ and $\kappa$.

\begin{addmargin}[2em]{2em}
$E_n'(c, \kappa)$: If $\mathcal{X} \colon P(S) \to \AlgM$ is an objectwise cofibrant, objectwise $(-1)$-connected, strongly cocartesian $(n+1)$-cube such that for all $s \in S$ the map $\mathcal{X}_\emptyset \to \mathcal{X}_s$ is $k_s$-connected and $k_s \geq \kappa$, then the diagram $F(\mathcal{X})$ is $(-c+\sum k_s)$-cartesian.
\end{addmargin}
\end{defn}

\begin{prop}\label{prop_if_f_satisfies_stable_excision}
If $F \colon \AlgM \to \AlgN$ satisfies $E_n'(c, \kappa)$, then
\begin{enumerate}[label=(\roman*)]
\item $\bar{T}_nF$ satisfies $E_n'(c-1, \kappa-1)$
\item $\bar{t}_nF\colon F \to \bar{T}_nF$ satisfies $O_n'(c, \kappa)$
\end{enumerate}
\end{prop}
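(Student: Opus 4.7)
The plan is to model the proof on \cite[1.4]{Goodwillie_calculus_3}, with the cofibrancy bookkeeping supplied by Propositions \ref{prop_join_increase_connectivity} and \ref{prop_join_preserves_strongly_cocartesian}. The core observation in both parts is that the cube construction $\mathcal{X}^{n+1}$ interacts well with strongly cocartesian cubes: it increases edge connectivity by one and preserves the strongly cocartesian property, which is exactly what is needed to feed intermediate cubes into the hypothesis $E_n'(c,\kappa)$.

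For (ii), I would apply $F$ to the cube $\mathcal{X}^{n+1}(A)$ for a cofibrant $k$-connected $A$ with $k \geq \kappa$. Since each edge $A \to C(A)$ is $(k+1)$-connected and $\mathcal{X}^{n+1}(A)$ is strongly cocartesian (as noted after Definition \ref{defn_goodwillie_n_cube}), the hypothesis $E_n'(c,\kappa)$ gives that $F(\mathcal{X}^{n+1}(A))$ is $\bigl(-c + (n+1)(k+1)\bigr)$-cartesian. Because the total fiber of this cube naturally computes the homotopy fiber of $\bar{t}_nF\colon F(A) \to \bar{T}_nF(A) \wequiv \holim_{\mathcal{P}_0(W)} F(\mathcal{X}^{n+1}(A))$, the desired $(-c+(n+1)k)$-connectivity follows immediately with room to spare.

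For (i), given a strongly cocartesian $(n+1)$-cube $\mathcal{Y}$ as in $E_n'(c-1,\kappa-1)$ with edges of connectivity $k_s \geq \kappa - 1$, I would use the natural identification
\[
\bar{T}_nF(\mathcal{Y}_S) \wequiv \holim_{U \in \mathcal{P}_0(W)} F\bigl(\mathcal{X}_U^{n+1}(\mathcal{Y}_S)\bigr)
\]
and, for each fixed nonempty $U \subseteq W$, regard $S \mapsto F(\mathcal{X}_U^{n+1}(\mathcal{Y}_S))$ as an $(n+1)$-cube indexed by $S$. Propositions \ref{prop_join_preserves_strongly_cocartesian} and \ref{prop_join_increase_connectivity} guarantee this cube is strongly cocartesian with edge connectivities at least $k_s + 1 \geq \kappa$, so $E_n'(c,\kappa)$ applies. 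Since total fibers commute with $\holim_U$, the total fiber of $\bar{T}_nF(\mathcal{Y})$ inherits the resulting connectivity, which is sufficient to conclude $E_n'(c-1,\kappa-1)$ (in fact with generous slack).

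The main obstacle is the careful bookkeeping of the cofibrancy and $(-1)$-connectivity hypotheses on all intermediate cubes, since $E_n'(c,\kappa)$ requires these conditions literally. They should be inherited from the assumed properties of $A$ or $\mathcal{Y}$ via pushouts along the cofibration $A \to C(A)$ and iterated application of $\mathcal{X}_U^{n+1}$, but this verification, together with justifying the commutation of total fibers with the $\holim_{\mathcal{P}_0(W)}$ built into $\bar{T}_nF$, is where the bulk of the technical care is concentrated.
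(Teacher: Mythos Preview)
Your proposal is correct and matches the paper's approach exactly: the paper's proof simply says that, using Propositions \ref{prop_join_increase_connectivity} and \ref{prop_join_preserves_strongly_cocartesian}, the argument is the same as \cite[1.4]{Goodwillie_calculus_3} (with \cite[1.20]{Goodwillie_calculus_2} replaced by repeated application of \cite[3.8]{Ching_Harper}). You have in fact spelled out that argument in more detail than the paper does, and your identification of the cofibrancy/$(-1)$-connectivity bookkeeping as the main technical point is accurate.
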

\begin{proof}
Using Propositions \ref{prop_join_increase_connectivity} and \ref{prop_join_preserves_strongly_cocartesian}, this is proven in the same way as \cite[1.4]{Goodwillie_calculus_3} (replacing \cite[1.20]{Goodwillie_calculus_2} equivalently with repeated application of \cite[3.8]{Ching_Harper}).
\end{proof}

The following proposition is the key ingredient used to prove Proposition \ref{prop_convergence_of_columns}. Since the proof of the following result is somewhat technical, we have deferred it to Section \ref{sec_big_proof}.

\begin{prop}\label{prop_technical_heart}
For each $m \geq 0$, the functor $(UQ)^mU \colon \AlgJ \to \AlgO$ satisfies $E_n'(-1, 0)$ for all $n \geq 1$.
\end{prop}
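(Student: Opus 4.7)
The plan is to induct on $m \geq 0$, with the base case $m=0$ reducing to a higher-Blakers-Massey statement in $\AlgJ$, and the inductive step exploiting a cartesianness-preservation property of $UQ$ coming from Blomquist's iterated delooping theorem.

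For the base case, I would verify that $U\colon \AlgJ \to \AlgO$ satisfies $E_n'(-1,0)$. Given a strongly cocartesian $(n+1)$-cube $\mathcal{X}$ in $\AlgJ$ of objectwise cofibrant, $(-1)$-connected entries with legs $\mathcal{X}_\emptyset\to\mathcal{X}_s$ of connectivity $k_s \geq 0$, iteratively applying the homotopy-pushout connectivity estimate \cite[3.8]{Ching_Harper} to the two-dimensional faces of $\mathcal{X}$ (as in Proposition \ref{prop_join_increase_connectivity}) yields the higher Blakers-Massey statement in $\AlgJ$: the cube $\mathcal{X}$ is $(1+\sum_s k_s)$-cartesian. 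Because $U$ is conservative and preserves homotopy limits (being a right Quillen functor along an operad map), $U\mathcal{X}$ remains $(1+\sum_s k_s)$-cartesian in $\AlgO$, which is precisely $E_n'(-1,0)$.

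For the inductive step, suppose $(UQ)^{m-1}U$ satisfies $E_n'(-1,0)$. I would write $(UQ)^mU\mathcal{X} = UQ\bigl((UQ)^{m-1}U\mathcal{X}\bigr)$ and argue that $UQ$, viewed as an endofunctor of $\AlgO$, preserves (and can only improve) the cartesianness of finite cubes of $(-1)$-connected cofibrant objects. This preservation is the content of a cubical extension of Blomquist's iterated delooping theorem \cite[7.1]{Blomquist_iterated_delooping}, the low-dimensional instances of which were already exploited in the proof of Proposition \ref{prop_id_tq_n_1_agree_order_n}: one considers the coface bicube formed by the cube $(UQ)^{m-1}U\mathcal{X}$ together with the coface direction $\mathcal{Y}\to UQ\mathcal{Y}$, and reads off the cartesianness of the target face. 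Combining this with the inductive hypothesis produces the desired $(1+\sum_s k_s)$-cartesianness of $(UQ)^mU\mathcal{X}$.

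The main expected obstacle is making precise the cartesianness-preservation property of $UQ$ with the correct constants. Since $Q$ is left Quillen and does not a priori preserve limits, this preservation is nontrivial; it relies on the fact that $UQ$ is weakly equivalent to the stabilization functor on $(-1)$-connected cofibrant inputs, and that stabilization converts strongly cocartesian cubes into highly cartesian ones via Blakers-Massey in $\AlgO$. Careful bookkeeping of connectivity via Propositions \ref{prop_join_increase_connectivity} and \ref{prop_join_preserves_strongly_cocartesian} throughout the induction is what produces the sharp constant $c=-1$ in the conclusion $E_n'(-1,0)$, rather than a weaker bound depending on $m$.
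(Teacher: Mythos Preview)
Your proposal has a genuine gap in the inductive step. You want to show that $UQ$ preserves the cartesianness estimate of an arbitrary $(n+1)$-cube in $\AlgO$, and you invoke ``a cubical extension of Blomquist's iterated delooping theorem'' to do this. But \cite[7.1]{Blomquist_iterated_delooping}, as used in Proposition~\ref{prop_id_tq_n_1_agree_order_n}, controls the cartesianness of the specific coface cubes built by iterating $UQ$ on a single object; it is not a statement about extending an \emph{arbitrary} $k$-cartesian cube $\mathcal{Z}$ in $\AlgO$ to a highly cartesian $(n+2)$-cube $\mathcal{Z}\to UQ\mathcal{Z}$. You correctly flag this as the main obstacle, but you do not indicate how to overcome it, and in fact establishing such a cubical statement would itself require cartesianness information about the lower-dimensional subcubes of $\mathcal{Z}$---precisely the input you are trying to avoid. (Your base-case justification is also slightly off: the fact that a strongly cocartesian cube in $\AlgJ$ is $\infty$-cartesian follows from stability, not from iterating \cite[3.8]{Ching_Harper}; the conclusion is correct but for a different reason.)

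The paper proceeds differently, by a \emph{double} induction on $(n,m)$ rather than on $m$ alone. For fixed $n$, the inductive step in $m$ does not attempt to transport cartesianness through $UQ$ directly. Instead, one first converts the cartesian estimate on $(UQ)^mU\mathcal{Y}$ into a cocartesian estimate via the higher \emph{dual} Blakers--Massey theorem \cite[1.11]{Ching_Harper}; then $Q$ preserves this cocartesian estimate, and stability of $\AlgJ$ together with \cite[3.10]{Ching_Harper} converts it back to the desired cartesian estimate on $(UQ)^{m+1}U\mathcal{Y}$. The crucial point is that the hypotheses of \cite[1.11]{Ching_Harper} require cartesian estimates on all proper subcubes of $(UQ)^mU\mathcal{Y}$, and these are supplied by the statements $P(j,m)$ for $1\le j\le n$. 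This is exactly why the outer induction on $n$ is needed, and why a single induction on $m$---without independently establishing the subcube estimates---does not close.
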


We analyze the functors $(UQ)^mU$ rather than $(UQ)^{m+1}$ because the former make it easier to use the assumption of our main result that $\TQ(A)$ is $0$-connected. The following lemma says that, up to weak equivalence, there is no difference in analyzing the Taylor towers of the two functors.

\begin{lem}\label{lem_t_nfq_vs_t_nf_q}
For any left homotopical functor $F$ and cofibrant $A \in \AlgO$, there is a natural commutative diagram
\begin{align}\label{eqn_t_nfq_vs_t_nf_q}
\xymatrix{
(FQ)A \ar[r] \ar[dr] & \bar{T}_n^i(FQ)A \ar ^-{\sim}[d]\\
& \bar{T}_n^i(F)QA
}
\end{align}
for all $n, i \geq 1$.
\end{lem}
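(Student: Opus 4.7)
The plan is to proceed by induction on $i$, with the essential content concentrated in the base case $i = 1$ and the iteration handled by a standard bootstrap.

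For $i = 1$, the first step is to observe that the stabilization $Q$, being a left Quillen functor, preserves cofibrations and pushouts. Since the cube $\mathcal{X}^{n+1}(A)$ of Definition \ref{defn_goodwillie_n_cube} is built by iterated pushouts from the cone maps $A \to C(A)$, and cones are themselves pushouts, I expect this to produce a natural weak equivalence of objectwise cofibrant $\mathcal{P}(W)$-shaped diagrams
\begin{align}
Q\bigl(\mathcal{X}^{n+1}(A)\bigr) \xrightarrow{\ \sim\ } \mathcal{X}^{n+1}(QA).
\end{align}
Since $F$ is left homotopical, applying $F$ yields an objectwise weak equivalence $FQ(\mathcal{X}^{n+1}(A)) \xrightarrow{\sim} F(\mathcal{X}^{n+1}(QA))$ of diagrams with cofibrant entries. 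Taking $\holim_{\mathcal{P}_0(W)}$ and comparing with the functorial factorizations from Definition \ref{defn_bar_T_n_F} should then deliver the desired natural weak equivalence $\bar{T}_n(FQ)A \xrightarrow{\sim} \bar{T}_n(F)(QA)$, with the triangle \eqref{eqn_t_nfq_vs_t_nf_q} commuting by naturality of these factorizations.

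For the inductive step from $i-1$ to $i$, I would apply the base-case argument with $G := \bar{T}_n^{i-1}(F)$ in place of $F$; this is legitimate because $\bar{T}_n^{i-1}(F)$ is left homotopical, by the observation recorded in the proof of Lemma \ref{lem_T_n_F_left_homotopical}. This produces a natural weak equivalence $\bar{T}_n\bigl(\bar{T}_n^{i-1}(F)\circ Q\bigr)(A) \xrightarrow{\sim} \bar{T}_n^i(F)(QA)$. To replace $\bar{T}_n^{i-1}(F)\circ Q$ by $\bar{T}_n^{i-1}(FQ)$, I would invoke the induction hypothesis at each cofibrant algebra $\mathcal{X}^{n+1}_U(A)$ entering the cube that defines the outer $\bar{T}_n$, and then pass to $\holim_{\mathcal{P}_0(W)}$; functoriality of the factorizations used to build $\bar{T}_n^i$ then secures commutativity of \eqref{eqn_t_nfq_vs_t_nf_q}.

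The main obstacle I anticipate is ensuring that $\holim_{\mathcal{P}_0(W)}$ sends the objectwise weak equivalences at hand to weak equivalences. This should be tractable once one verifies objectwise fibrancy of the relevant diagrams, either directly or via a functorial fibrant replacement inserted into the definition of $\bar{T}_n$; the fact that $\mathcal{P}_0(W)$ is a fixed finite indexing category keeps the bookkeeping manageable. With that step in place, the whole argument reduces to the left Quillen property of $Q$, the left homotopicality of $F$ and its iterates, and repeated use of the functorial factorizations defining $\bar{T}_n^i$.
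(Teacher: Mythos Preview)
Your proposal is correct and follows essentially the same route as the paper: induct on $i$, with the base case reducing to the compatibility of $Q$ with the cube construction $\mathcal{X}^{n+1}$. The only refinement worth noting is that the paper observes $Q$ is a left adjoint (hence preserves all colimits, including cones and the iterated pushouts defining $\mathcal{X}^{n+1}$), so one gets a natural \emph{isomorphism} $Q\mathcal{X}^{n+1}(A)\cong\mathcal{X}^{n+1}(QA)$ rather than merely a weak equivalence; this slightly streamlines the induction and sidesteps the $\holim$-invariance worry you flag.
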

\begin{proof}
Since $\mathcal{X}^{n+1}(A)$ is a pushout cube and $Q$ is a left adjoint, there is a natural isomorphism $Q\mathcal{X}^{n+1}(A) \ \iso \ \mathcal{X}^{n+1}(QA)$. This proves the lemma for $i =1$, and one then continues inductively.
\end{proof}

\begin{rem}
Applying $\hocolim_i$ to \eqref{eqn_t_nfq_vs_t_nf_q} shows that Lemma \ref{lem_t_nfq_vs_t_nf_q} remains true if $\bar{T}_n^i$ is replaced by $\bar{P}_n$.
\end{rem}

\noindent
We will now work towards proving Proposition \ref{prop_convergence_of_columns}, assuming Proposition \ref{prop_technical_heart}.

\begin{prop}\label{prop_uqm_to_tnuqm}
For all $n, i \geq 1$ and $m \geq 0$, if $A \in \AlgO$ is cofibrant and $QA$ is $0$-connected, then the map 
\begin{align}
(UQ)^{m+1}A \to \bar{T}_n^i\big((UQ)^{m+1}	\big)A
\end{align}
is $(n+2)$-connected.
\end{prop}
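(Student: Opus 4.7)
The plan is to factor $(UQ)^{m+1} = (UQ)^m U \circ Q$ and apply Proposition \ref{prop_technical_heart} to the outer functor $(UQ)^m U$. Since $Q$ is a left adjoint (and so commutes with the pushout cube $\mathcal{X}^{n+1}$), Lemma \ref{lem_t_nfq_vs_t_nf_q} with $F = (UQ)^m U$ produces a natural weak equivalence
\[
\bar{T}_n^i\big((UQ)^{m+1}\big)A \simeq \bar{T}_n^i\big((UQ)^m U\big)(QA)
\]
under $(UQ)^{m+1}A = (UQ)^m U(QA)$. It therefore suffices to prove that the map $(UQ)^m U(QA) \to \bar{T}_n^i\big((UQ)^m U\big)(QA)$ is $(n+2)$-connected.

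By Proposition \ref{prop_technical_heart}, $(UQ)^m U$ satisfies $E_n'(-1, 0)$. A routine induction using the preceding proposition---part (i) to propagate $E_n'$-estimates through each application of $\bar{T}_n$, and part (ii) to extract the $O_n'$-estimate of each successor map---shows that for every $j \geq 1$, the functor $\bar{T}_n^{j-1}\big((UQ)^m U\big)$ satisfies $E_n'(-j, 1-j)$ and the successor map $\bar{T}_n^{j-1}\big((UQ)^m U\big) \to \bar{T}_n^j\big((UQ)^m U\big)$ satisfies $O_n'(-j, 1-j)$.

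To finish, note that $QA$ is cofibrant (as $Q$ is a left Quillen functor) and that the assumption that $QA$ is $0$-connected translates, in the conventions used in the proof of Proposition \ref{prop_id_tq_n_1_agree_order_n}, into the map $QA \to \ast$ being $1$-connected. Plugging $c = -j$ and $k = 1$ into the definition of $O_n'$, the $j$-th successor map evaluated at $QA$ is $(j + n + 1)$-connected, hence at least $(n+2)$-connected for every $j \geq 1$. Composing these maps for $j = 1, \ldots, i$ preserves this bound, giving the desired $(n+2)$-connectivity. The one delicate point I foresee is checking that the mixed-source versions of Lemma \ref{lem_t_nfq_vs_t_nf_q} and the preceding proposition apply verbatim to the functor $(UQ)^m U : \AlgJ \to \AlgO$, but this should follow because their proofs hinge only on $Q$ being a left adjoint and on the connectivity machinery of Propositions \ref{prop_join_increase_connectivity} and \ref{prop_join_preserves_strongly_cocartesian}, neither of which requires that source and target coincide.
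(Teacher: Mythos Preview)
Your proof is correct and follows essentially the same route as the paper: factor $(UQ)^{m+1}=(UQ)^mU\circ Q$, invoke Lemma~\ref{lem_t_nfq_vs_t_nf_q} with $F=(UQ)^mU$, and deduce the $(n+2)$-connectivity of $(UQ)^mU(QA)\to\bar{T}_n^i\big((UQ)^mU\big)(QA)$ from the $E_n'(-1,0)$ estimate of Proposition~\ref{prop_technical_heart}. Your explicit unpacking of the iterated $E_n'\Rightarrow O_n'$ bookkeeping (yielding $O_n'(-j,1-j)$ for the $j$-th successor map and hence $(j+n+1)$-connectivity at $QA$) is exactly the content the paper compresses into a single sentence; the paper's passing mention of $\pi_*$ commuting with filtered homotopy colimits is not actually needed for the $\bar{T}_n^i$ statement and your omission of it is fine.
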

\begin{proof}
It follows from Propositions \ref{prop_if_f_satisfies_stable_excision} and \ref{prop_technical_heart} that the map $(UQ)^mUQA \to \bar{T}_n\big( (UQ)^mU \big)QA$ is $(n+2)$-connected. Inductive application of Proposition \ref{prop_if_f_satisfies_stable_excision} then shows that, for $i \geq 1$, the maps $\bar{T}_n^i\big((UQ)^mU\big)QA \to \bar{T}_n^{i+1}\big(	 (UQ)^mU	\big)QA$ are also at least $(n+2)$-connected. Since $\pi_\ast$ commutes with filtered homotopy colimits, this means that, for all $i \geq 1$, the map $(UQ)^mUQA \to \bar{T}_n^i\big(	 (UQ)^mU	\big)QA$ is $(n+2)$-connected as well. The result then follows from Lemma \ref{lem_t_nfq_vs_t_nf_q} with $F = (UQ)^mU$.
\end{proof}

The connectivity estimates provided by Proposition \ref{prop_uqm_to_tnuqm} now translate to similar estimates for the functors $\TQ_n$ which we use to prove the following.

\begin{prop}\label{prop_convergence_of_columns}
For any $k \geq 0$, the natural map 
\begin{align}
\TQ_kA \to \bar{P}_\infty\big(	\TQ_k\big)A
\end{align}
is a weak equivalence.
\end{prop}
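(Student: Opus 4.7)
The plan is to commute homotopy limits through the Taylor tower construction and then apply Proposition \ref{prop_uqm_to_tnuqm} stage-by-stage. Since $\TQ_k = \holim_{\Delta^{\leq k}}(UQ)^{\bullet+1}$ is a finite homotopy limit of the functors $(UQ)^{m+1}$, and since $\bar{T}_n$ is itself defined by a finite homotopy limit over $\mathcal{P}_0(W)$, I expect a natural weak equivalence
\begin{align}
\bar{T}_n^i(\TQ_k) A \wequiv \holim_{\Delta^{\leq k}} \bar{T}_n^i\bigl((UQ)^{\bullet+1}\bigr) A
\end{align}
for cofibrant $A$, proved by induction on $i \geq 1$. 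The base case amounts to swapping $\holim_{\mathcal{P}_0(W)}$ past $\holim_{\Delta^{\leq k}}$ in the formula $\bar{T}_n(\TQ_k)(A) \simeq \holim_{\mathcal{P}_0(W)} \TQ_k(\mathcal{X}^{n+1}(A))$, and the inductive step repeats this commutation for the outermost $\bar{T}_n$ (using the fact that a pointwise weak equivalence of functors induces a weak equivalence after applying $\bar{T}_n$).

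Next, I would use Proposition \ref{prop_uqm_to_tnuqm} to bound connectivity: under the standing hypothesis that $\TQ(A)$ is $0$-connected (so that $QA$ is $0$-connected), each map $(UQ)^{m+1} A \to \bar{T}_n^i((UQ)^{m+1}) A$ is $(n+2)$-connected. Applying $\holim_{\Delta^{\leq k}}$ to a map of $\Delta^{\leq k}$-diagrams whose components are all $(n+2)$-connected loses at most a fixed constant $c_k$ of connectivity (by, e.g., iterating the connectivity estimate in \cite[3.8]{Ching_Harper} on the homotopy fibers), so combining this with the previous step gives that $\TQ_k A \to \bar{T}_n^i(\TQ_k) A$ is $(n+2-c_k)$-connected. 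The filtered $\hocolim_i$ preserves connectivity, so $\TQ_k A \to \bar{P}_n(\TQ_k) A$ is $(n+2-c_k)$-connected as well.

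Finally, since $n+2-c_k \to \infty$ as $n \to \infty$, passing to $\holim_n$ yields the desired weak equivalence $\TQ_k A \wequiv \bar{P}_\infty(\TQ_k) A$.

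The main obstacle I anticipate is the first step: rigorously justifying that the weak equivalence between $\bar{T}_n^i(\TQ_k) A$ and $\holim_{\Delta^{\leq k}} \bar{T}_n^i((UQ)^{\bullet+1}) A$ persists through iteration of $\bar{T}_n$. Besides tracking the functorial factorizations hidden in the definition of $\bar{T}_n$, one must verify that the cube construction $\mathcal{X}^{n+1}(-)$ and the $\Delta^{\leq k}$-totalization interact well enough for the requisite homotopy limits to commute. Propositions \ref{prop_join_increase_connectivity} and \ref{prop_join_preserves_strongly_cocartesian} should help here, ensuring that we remain inside the setting of objectwise cofibrant, strongly cocartesian cubes where the needed identifications of homotopy limits hold.
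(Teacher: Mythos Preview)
Your proposal is correct and follows essentially the same route as the paper: commute the finite homotopy limit $\holim_{\Delta^{\leq k}}$ past $\bar{T}_n^i$, apply Proposition \ref{prop_uqm_to_tnuqm} levelwise, absorb the bounded connectivity loss from the finite holim, and then pass to $\hocolim_i$ and $\holim_n$. The paper simply asserts in one line that ``$\bar{T}_{n+k}^i$ commutes with homotopy limits'' and cites \cite[1.20]{Goodwillie_calculus_2} or \cite[3.8]{Ching_Harper} for the connectivity loss, whereas you spell out the inductive commutation argument and flag it as the main technical point; your caution is reasonable but the step is indeed routine.
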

\begin{proof}
If $k=0$, this follows from the fact that $\TQ_0 = UQ$ is itself an excisive functor.

We now give the proof for $k=1$ as it is instructive in understanding the more general argument. First, we use the fact that $\TQ_1A$ can be calculated as the homotopy limit of a punctured $2$-cube. In more detail, there is a canonical map
\begin{equation}
\big( [0] \rightarrow [1] \leftarrow [0] \big) \quad \to  \quad \Delta^{\leq 1}
\end{equation}
which is left cofinal, i.e., homotopy initial (see \cite[Section 6]{Carlsson}, \cite{Dugger_homotopy_colimits}, or \cite[6.7]{Sinha_cosimplicial_models}). Applied to our case, this means that the back face of the following diagram

\begin{align}\label{eqn_tq_k_to_T_n_tq_k}
\xymatrix{
\TQ_1A \ar[dd] \ar[dr] \ar[rr] && UQA \ar[dd]|\hole \ar^-{n+2}[dr]\\
& \bar{T}_{n}^i(\TQ_1)A \ar[dd] \ar[rr] && \bar{T}_{n}^i(UQ)A \ar[dd]\\
UQA \ar[rr]|\hole \ar^-{n+2}[dr]&& (UQ)^2A \ar^-{n+2}[dr]\\
& \bar{T}_{n}^i(UQ)A\ar[rr] && \bar{T}_{n}^i(UQ)^2A
}
\end{align}
is a homotopy pullback square. For any $n \geq 1$, the indicated maps are $(n+2)$-connected by Proposition \ref{prop_uqm_to_tnuqm} and, since $\bar{T}_{n}^i$ commutes with homotopy limits, the front face of \eqref{eqn_tq_k_to_T_n_tq_k} is also a homotopy pullback square. As both the back and front faces of \eqref{eqn_tq_k_to_T_n_tq_k} are cartesian, it follows that the entire $3$-cube is cartesian. Repeated application of \cite[3.8]{Ching_Harper} then implies that the map $\TQ_1A \to \bar{T}_{n}^i(\TQ_1)A$ is $(n+1)$-connected. Since $\pi_\ast$ commutes with filtered homotopy colimits, this implies that the map 
\begin{align}
\TQ_1A \to \bar{P}_{n}\Big(\TQ_1	\Big)A
\end{align}
is also $(n+1)$-connected. The result now follows by letting $n$ tend to infinity and considering the associated $\lim^1$ short exact sequence.

The argument for $k \geq 1$ is similar. We first appeal to the fact that $(\TQ_k)A$ can be calculated as the homotopy limit of a punctured $(k+1)$-cube and consider the map $\TQ_kA \to \bar{T}_{n+k-1}^i(\TQ_k)A$. We construct a $(k+2)$-cube analogous to \eqref{eqn_tq_k_to_T_n_tq_k} from the maps $(UQ)^mA \to \bar{T}_{n+k-1}^i(UQ)^mA$, which are $(n+k+1)$-connected by Proposition \ref{prop_uqm_to_tnuqm}. Repeated application of \cite[3.8]{Ching_Harper} (or appealing to the proof of \cite[1.20]{Goodwillie_calculus_1}) shows that the map $\TQ_kA \to \bar{T}_{n+k-1}^i(\TQ_k)A$ is $(n+1)$-connected. As in the $k=1$ case, it then follows that the map

\begin{equation}
\TQ_kA \to \bar{P}_{n+k-1}(\TQ_k)A
\end{equation}
is $(n+1)$-connected. Letting $n$ tend to infinity and analyzing the associated $\lim^1$ short exact sequence completes the proof. 
\end{proof}

\section{A sufficient condition}\label{sec_sufficient_condition}

The purpose of this section is to prove Theorem \ref{thm_sufficient_condition}. The strategy is to, first, use the fact \cite[4.10]{Harper_Hess} that $\TQ(A)$ can be calculated as the realization of a simplicial bar construction, i.e., $\TQ(A)\simeq\abs{\BAR(\tau_1\capO, \capO,A)}$. Theorem \ref{thm_sufficient_condition} then follows by analyzing the spectral sequence \cite[4.43]{Harper_Hess} arising from the filtration by simplicial skeleta of the associated bar construction. For further details on the construction of this spectral sequence, see, for instance, \cite[X.2.9]{EKMM}.

\begin{proof}[Proof of Theorem \ref{thm_sufficient_condition}]
Let $A$ be a $(-1)$-connected cofibrant $\capO$-algebra and suppose the following.
\begin{align*}
\text{($\ast$) : For some $k \geq 2$, the map $\capO[k]\wedge_{\Sigma_k}A^{\wedge k} \to A$ induces a surjection on $\pi_0$.}
\end{align*} 

Consider the simplicial $\capR$-module $\BAR(\tau_1\capO,\capO,A)$ and associated spectral sequence, as below.
\begin{align}\label{eqn_bar_spectral_sequence}
E^2_{p,q}=H_p\big(\pi_q\big(\BAR(\tau_1\capO,\capO,A)\big)	\big) \implies \pi_{p+q}(\abs{\BAR(\tau_1\capO,\capO,A)}) 
\end{align}
Along with our standing connectivity condition on $\capO$ (see Assumption \ref{assumption}) the fact that $A$ is $(-1)$-connected implies that $\BAR(\tau_1\capO,\capO,A)$ is objectwise $(-1)$-connected. Hence, the only contribution to $\pi_0(\abs{\BAR(\tau_1\capO,\capO,A)})$ comes from $p=q=0$. Furthermore, for dimensional reasons, we have $E^2_{0,0} = E^\infty_{0,0}$. The result will therefore follow by showing that $E^2_{0,0} = \ast$.

It follows from \eqref{eqn_bar_spectral_sequence} and properties of simplicial abelian groups (see, e.g., \cite[III.2]{Goerss_Jardine}) that
\begin{equation}\label{eqn_calculation_1}
H_0\big(\pi_0\big(\BAR(\tau_1\capO,\capO,A)\big)	\big) \ \iso \ \pi_0(\tau_1\capO\circ(A))/\im(d_0-d_1)\\
\end{equation}
We will now show that $(\ast)$ implies that the map
\begin{equation}
\begin{gathered}
\pi_0\big(\tau_1\capO\circ\capO\circ(A)) \xrightarrow{d_0-d_1} \pi_0(\tau_1\capO\circ(A)) \ \ \iso \\
\pi_0\big( \capO[1] \wedge \coprod_{k=1}^\infty\capO[k]\wedge_{\Sigma_k}A^{\wedge k}\big) \xrightarrow{d_0-d_1} \pi_0\big( \capO[1] \wedge A)
\end{gathered}
\end{equation}
is surjective. Indeed, the cofibrancy condition of Assumption \ref{assumption} guarantees that $\capO[1]$ is flat in the language of \cite{Schwede_book_project} and, hence, it suffices by \cite[5.23]{Schwede_book_project} to show that 
\begin{align}
\pi_0\Big( 	\coprod_{k=1}^\infty\capO[k]\wedge_{\Sigma_k}A^{\wedge k}\Big) \ \iso \ \bigoplus_{k=1}^\infty \pi_0\big( \capO[k]\wedge_{\Sigma_k}A^{\wedge k}\big) \xrightarrow{d_0-d_1} \pi_0(A)
\end{align}
is surjective. Since the map $d_0$ is trivial when $k \geq 2$, the result now follows immediately from $(\ast)$.
\end{proof}

\section{Proof of Proposition \ref{prop_technical_heart}}\label{sec_big_proof}
The purpose of this section is to prove Proposition \ref{prop_technical_heart}. The strategy is a double induction, on $n$ and $m$. More precisely, for fixed $m \geq 0$ and $n \geq 1$, let $P(n, m)$ be the following statement:
\begin{equation}\label{eqn_p_n_m}
\text{$P(n, m)$: The functor $(UQ)^mU \colon \AlgJ \to \AlgO$ satisfies $E_n'(-1,0)$.}
\end{equation}
Lemmas \ref{lem_p_1_m}, \ref{lem_p_2_m}, and \ref{lem_p_n_m} show that $P(n,m)$ holds for all $m \geq 0$ and $n \geq 1$, and this proves Proposition \ref{prop_technical_heart}. We have included an extra base case to better illustrate, in a low dimension, the more general inductive step. Throughout the arguments that follow, we make frequent use of the fact \cite[4.4]{Schonsheck_fibration_theorems} that the forgetful functor $U \colon \AlgJ \to \AlgO$ preserves cartesianness of all cubes, and that \cite[4.5]{Schonsheck_fibration_theorems} the functor $Q \colon \AlgO \to \AlgJ$ preserves cocartesianness of objectwise cofibrant and $(-1)$-connected cubes.

\begin{lem}\label{lem_p_1_m}
$P(1,m)$ holds for all $m \geq 0$.
\end{lem}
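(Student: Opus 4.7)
The approach is induction on $m \geq 0$. The key structural fact is that $\AlgJ = \Alg_{\tau_1 \capO}$ is a stable category, since $\tau_1\capO$ is concentrated in arity $1$ (so $\tau_1\capO$-algebras are essentially modules over the ring spectrum $\tau_1\capO[1]$). In particular, every strongly cocartesian $2$-cube in $\AlgJ$ is automatically $\infty$-cartesian.

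\emph{Base case} ($m=0$): Let $\mathcal{X}$ be a strongly cocartesian $2$-cube in $\AlgJ$ satisfying the hypotheses of $E_1'(-1, 0)$, with $\mathcal{X}_\emptyset \to \mathcal{X}_{\{s\}}$ being $k_s$-connected and $k_s \geq 0$. By stability, $\mathcal{X}$ is $\infty$-cartesian in $\AlgJ$. Since $U$ preserves cartesianness of all cubes, $U\mathcal{X}$ is $\infty$-cartesian in $\AlgO$, which is in particular $(1 + k_1 + k_2)$-cartesian.

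\emph{Inductive step}: Assume $(UQ)^m U$ satisfies $E_1'(-1, 0)$. Exploiting the identity $(UQ)^{m+1}U = (UQ)^m U \circ QU$, it suffices to verify that $QU\mathcal{X}$ again satisfies the hypotheses of $E_1'(-1, 0)$ with initial-map connectivities at least $k_1, k_2 \geq 0$; the inductive hypothesis applied to $QU\mathcal{X}$ then yields that $(UQ)^m U (QU\mathcal{X}) = (UQ)^{m+1}U(\mathcal{X})$ is $(1+k_1+k_2)$-cartesian. The verification proceeds by checking: first, that $U\mathcal{X}$ is strongly cocartesian in $\AlgO$ (using stability of $\AlgJ$ together with preservation of cartesianness by $U$ and direct comparison of underlying spectra), as well as objectwise cofibrant and $(-1)$-connected; second, that $Q$ then preserves cocartesianness (by the cited property of $Q$), yielding $QU\mathcal{X}$ cocartesian (hence strongly cocartesian for $2$-cubes) in $\AlgJ$; third, that cofibrancy and $(-1)$-connectedness are maintained by $Q$ (left Quillen, combined with the connectivity assumptions on $\capO$); and fourth, that initial-map connectivities are preserved or enhanced throughout, since $U$ preserves them exactly and $Q$ preserves or improves them between $(-1)$-connected cofibrant objects.

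The main obstacle is verifying that $U$ preserves strong cocartesianness of $2$-cubes in the inductive step: while $U$ is a right adjoint and hence only guaranteed to preserve cartesianness, the claim here relies crucially on the stability of $\AlgJ$—which makes $\mathcal{X}$ simultaneously cartesian and cocartesian—together with the structure of the positive flat stable model structure to translate this back to strong cocartesianness in $\AlgO$. Once this is in hand, the alternation of $U$ and $Q$ maintains all the relevant hypotheses and the induction closes.
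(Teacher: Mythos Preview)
Your base case ($m=0$) is correct and matches the paper. The gap is in the inductive step: you need $U\mathcal{X}$ to be (strongly) cocartesian in $\AlgO$, and this is unjustified and in general false. The functor $U\colon\AlgJ\to\AlgO$ is a right adjoint, so it preserves cartesianness, but it does \emph{not} preserve homotopy pushouts. Stability of $\AlgJ$ makes $\mathcal{X}$ simultaneously cartesian and cocartesian there, whence $U\mathcal{X}$ is cartesian in $\AlgO$; but $\AlgO$ is not stable, so you cannot convert this back to cocartesianness. Concretely, already for the square $X \leftarrow \ast \to X$ in $\AlgJ$, the homotopy coproduct of $UX$ with itself in $\AlgO$ is typically much larger than $U(X\vee X)$ (think of the free product of two square-zero algebras over a nontrivial operad). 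Your appeal to ``the structure of the positive flat stable model structure'' does not supply a mechanism to bridge this, and none exists at the level of $\infty$-cocartesianness.

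The paper's proof never asks $U$ to preserve cocartesianness. It works instead with \emph{finite} connectivity estimates and uses the Blakers--Massey theorems in $\AlgO$ to pass between cartesian and cocartesian estimates. For $m=1$, Blakers--Massey shows that $U\mathcal{Y}$ is $(2+k_1+k_2)$-cocartesian (not $\infty$-cocartesian); $Q$ preserves this, stability of $\AlgJ$ converts it to a $(1+k_1+k_2)$-cartesian estimate, and $U$ preserves that. For the step $m\to m+1$ with $m\geq 1$, one starts from the $(1+k_1+k_2)$-cartesian estimate on $(UQ)^mU\mathcal{Y}$ given by the inductive hypothesis and applies the \emph{dual} Blakers--Massey theorem in $\AlgO$ to obtain a $(2+k_1+k_2)$-cocartesian estimate, then repeats the same cycle through $Q$ and $U$. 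The essential point is that only cocartesianness up to a specific finite connectivity is ever required, and that is exactly what these theorems deliver; your attempt to secure $\infty$-cocartesianness of $U\mathcal{X}$ is both unnecessary and unavailable.
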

\begin{proof}
Let $\mathcal{Y}$ be an objectwise cofibrant, objectwise $(-1)$-connected, cocartesian $2$-cube
\begin{align}
\xymatrix{
Y_\emptyset \ar ^-{k_1}[r] \ar ^-{k_2}[d] & Y_{\sett{1}} \ar[d]\\
Y_{\sett{2}} \ar[r] & Y_{\sett{1,2}}
}
\end{align}
in $\AlgJ$, with initial maps of the indicated connectivities, and $k_1, k_2 \geq 0$. Since $\mathcal{Y}$ is a cocartesian square in the stable category $\AlgJ$, it is also cartesian. Hence, so is $U\mathcal{Y}$; this proves $P(1, 0)$.

To see that $P(1, 1)$ holds, note that $Y_{\sett{2}}\to Y_{\sett{1,2}}$ is $k_1$-connected and $Y_{\sett{1}} \to Y_{\sett{1,2}}$ is $k_2$-connected, by \cite[3.8]{Ching_Harper}. It follows from \cite[1.8]{Ching_Harper} that $U\mathcal{Y}$ is $(2+k_1+k_2)$-cocartesian and, hence, $QU\mathcal{Y}$ is as well. By \cite[3.10]{Ching_Harper}, this means that $QU\mathcal{Y}$ is $(1+k_1+k_2)$-cartesian and, hence, so is $UQU\mathcal{Y}$; this verifies $P(1,1)$.

Inductively, suppose that $P(1,m)$ holds for fixed $m \geq 1$. Let's show that $P(1,m+1)$ is true. By the inductive hypothesis, the cube $(UQ)^mU\mathcal{Y}$ is $(1+k_1+k_2)$-cartesian. Using now the dual Blakers-Massey theorem \cite[1.9]{Ching_Harper} for $\AlgO$, we have that this cube is $(2+k_1+k_2)$-cocartesian and, hence, so is $Q(UQ)^mU\mathcal{Y}$. By \cite[3.10]{Ching_Harper}, this means $Q(UQ)^mU\mathcal{Y}$ is $(1+k_1+k_2)$-cartesian, so $UQ(UQ)^mU\mathcal{Y}=(UQ)^{m+1}U\mathcal{Y}$ is as well.
\end{proof}

\begin{lem}\label{lem_p_2_m}
$P(2, m)$ holds for all $m \geq 0$.
\end{lem}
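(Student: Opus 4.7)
The plan is to mirror the proof of Lemma \ref{lem_p_1_m}, replacing strongly cocartesian $2$-cubes with strongly cocartesian $3$-cubes. Fix an objectwise cofibrant, objectwise $(-1)$-connected, strongly cocartesian $3$-cube $\mathcal{Y}$ in $\AlgJ$ whose initial maps $Y_\emptyset\to Y_{\{i\}}$ are $k_i$-connected with $k_i \geq 0$ for $i=1,2,3$. The goal is to show that $(UQ)^m U \mathcal{Y}$ is $(1 + k_1 + k_2 + k_3)$-cartesian for every $m \geq 0$.

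For the base case $m = 0$, I would argue as in the verification of $P(1,0)$ in Lemma \ref{lem_p_1_m}: since $\AlgJ$ is stable, the strongly cocartesian cube $\mathcal{Y}$ is cocartesian and hence cartesian. Because $U$ preserves cartesianness by \cite[4.4]{Schonsheck_fibration_theorems}, $U\mathcal{Y}$ is cartesian, and therefore a fortiori $(1+k_1+k_2+k_3)$-cartesian.

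For $m=1$, I would first use \cite[3.8]{Ching_Harper} to propagate the connectivities of the initial maps across the strongly cocartesian cube $U\mathcal{Y}$, then apply the Blakers-Massey theorem \cite[1.8]{Ching_Harper} in $\AlgO$ to obtain a quantitative cocartesianness estimate for $U\mathcal{Y}$ whose bound strictly exceeds $1+k_1+k_2+k_3$. Since $Q$ preserves cocartesianness of objectwise cofibrant, $(-1)$-connected cubes by \cite[4.5]{Schonsheck_fibration_theorems}, the cube $QU\mathcal{Y}$ remains cocartesian to the same degree; then \cite[3.10]{Ching_Harper} yields that $QU\mathcal{Y}$ is $(1+k_1+k_2+k_3)$-cartesian, and a final application of $U$ preserves this. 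For the inductive step, given $P(2,m)$ with $m \geq 1$, the cube $(UQ)^m U\mathcal{Y}$ is $(1+k_1+k_2+k_3)$-cartesian; applying dual Blakers-Massey \cite[1.9]{Ching_Harper} in $\AlgO$ then produces a strictly better cocartesianness estimate, after which the same sequence $Q$, \cite[3.10]{Ching_Harper}, $U$ recovers $(UQ)^{m+1} U \mathcal{Y}$ as $(1+k_1+k_2+k_3)$-cartesian.

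The main obstacle will be the explicit connectivity bookkeeping when Blakers-Massey and dual Blakers-Massey are applied to $3$-cubes: compared to the $2$-cube case of Lemma \ref{lem_p_1_m}, additional proper subfaces contribute to the estimates, and one must check that the ``net gain'' in going from cartesian to cocartesian via \cite[1.9]{Ching_Harper} and the subsequent ``net loss'' in going back via \cite[3.10]{Ching_Harper} still balance precisely to recover $(1+k_1+k_2+k_3)$-cartesianness. A secondary concern is maintaining objectwise cofibrancy and objectwise $(-1)$-connectivity of the intermediate cubes $(UQ)^i U\mathcal{Y}$ so that \cite[4.5]{Schonsheck_fibration_theorems} continues to apply at each stage, which should follow by induction using Assumption \ref{assumption} together with \cite[3.8]{Ching_Harper}.
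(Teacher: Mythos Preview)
Your overall strategy is right, but there is a genuine gap in the inductive step. The dual Blakers--Massey theorem you cite, \cite[1.9]{Ching_Harper}, is the $2$-cube statement; for a $3$-cube you need the \emph{higher} dual Blakers--Massey theorem \cite[1.11]{Ching_Harper} (and likewise \cite[1.10]{Ching_Harper} rather than \cite[1.8]{Ching_Harper} for the $m=1$ case). This is not just a citation slip: the hypotheses of \cite[1.11]{Ching_Harper} require cartesianness estimates on all proper sub-cubes of $(UQ)^mU\mathcal{Y}$, not just on the whole $3$-cube, and its conclusion is phrased as a minimum over all nontrivial partitions $\lambda$ of $\{1,2,3\}$ of the quantity $\sum_{V\in\lambda} k_V$. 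Your inductive hypothesis $P(2,m)$ alone does not supply the needed estimates on the $2$-dimensional faces.

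What the paper does, and what your plan is missing, is to invoke Lemma~\ref{lem_p_1_m}: since $P(1,m)$ holds for all $m\ge 0$, each $2$-dimensional face of $(UQ)^mU\mathcal{Y}$ is $(1+k_i+k_j)$-cartesian, which both verifies the hypotheses of \cite[1.11]{Ching_Harper} and forces the minimum over partitions to be attained by the singleton partition, yielding $(3+k_1+k_2+k_3)$-cocartesianness. After that, your remaining steps (apply $Q$, then \cite[3.10]{Ching_Harper}, then $U$) are correct. So the fix is structural: the induction on $m$ for $P(2,-)$ must sit inside the outer induction on $n$, using $P(1,-)$ as input, rather than being a self-contained mirror of Lemma~\ref{lem_p_1_m}.
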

\begin{proof}
Let $\mathcal{Y}$ be an objectwise cofibrant, objectwise $(-1)$-connected, strongly cocartesian $3$-cube
\begin{align}
 \xymatrix @C=1em@R=1em{
\mathcal{Y}_\emptyset \ar ^-{k_1}[rr] \ar ^-{k_2}[dd] \ar ^-{k_3}[dr] && \mathcal{Y}_{\sett{1}}\ar@{.>}[dd] \ar [dr]\\
&\mathcal{Y}_{\sett{3}} \ar[rr] \ar[dd] && \mathcal{Y}_{\sett{1,3}} \ar[dd]\\
\mathcal{Y}_{\sett{2}} \ar[rr] \ar [dr] && \mathcal{Y}_{\sett{1,2}} \ar@{.>}[dr]\\
& \mathcal{Y}_{\sett{2,3}} \ar[rr] && \mathcal{Y}_{\sett{1,2,3}}
}
\end{align}
in $\AlgJ$, with $k_1, k_2, k_3 \geq 0$. Arguing as in Lemma \ref{lem_p_1_m} and replacing \cite[1.8]{Ching_Harper} by \cite[1.10]{Ching_Harper}, shows that $P(2,0)$ and $P(2,1)$ hold.

Inductively, suppose that $P(2, m)$ is true for fixed $m \geq 1$. Let's show that $P(2, m+1)$ is true. By the inductive hypothesis, the cube $(UQ)^mU\mathcal{Y}$ is $(1+k_1+k_2+k_3)$-cartesian. As in Lemma \ref{lem_p_1_m}, we first establish a cocartesian estimate on $(UQ)^mU\mathcal{Y}$, but now use the higher dual Blakers-Massey theorem \cite[1.11]{Ching_Harper} of Ching-Harper. Note that applying Lemma \ref{lem_p_1_m} to the subcubes of $(UQ)^mU\mathcal{Y}$ guarantees that the conditions of this theorem are satisfied. 

We will now adopt the notation of \cite[1.11]{Ching_Harper}. Since $P(1, k)$ holds for all $k \geq 0$, the minimum of $\sum_{V \in \lambda}k_V$ over all partitions $\lambda$ of $\sett{1,2,3}$ by nonempty sets not equal to $\sett{1,2,3}$ is achieved by the partition of singleton sets. It then follows from \cite[1.11]{Ching_Harper} that $(UQ)^mU\mathcal{Y}$ is $(3+k_1+k_2+k_3)$-cocartesian and, hence, so is $Q(UQ)^mU\mathcal{Y}$. By \cite[3.10]{Ching_Harper}, this means $Q(UQ)^mU\mathcal{Y}$ is $(1+k_1+k_2+k_3)$-cartesian, so $UQ(UQ)^mU\mathcal{Y} = (UQ)^{m+1}U\mathcal{Y}$ is as well.
\end{proof}

\begin{lem}\label{lem_p_n_m}
Fix $n \geq 2$. Assume, for all $1 \leq j \leq n$, that $P(j, m)$ holds for all $m \geq 0$. Then $P(n+1, m)$ also holds for all $m \geq 0$.
\end{lem}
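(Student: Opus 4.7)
The plan is to argue by induction on $m$, mirroring the structure of Lemmas \ref{lem_p_1_m} and \ref{lem_p_2_m}. Fix an objectwise cofibrant, objectwise $(-1)$-connected, strongly cocartesian $(n+2)$-cube $\mathcal{Y}$ in $\AlgJ$ with initial maps of connectivities $k_1,\ldots,k_{n+2}\geq 0$; the goal will be to show that $(UQ)^m U\mathcal{Y}$ is $(1+\sum_i k_i)$-cartesian for every $m \geq 0$.

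The base case $m=0$ will use the stability of $\AlgJ$: the (strongly) cocartesian cube $\mathcal{Y}$ is automatically cartesian in any stable category, and since $U$ preserves cartesianness of cubes, $U\mathcal{Y}$ is cartesian, hence in particular $(1+\sum k_i)$-cartesian.

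For the inductive step, assume $(UQ)^m U\mathcal{Y}$ is $(1+\sum k_i)$-cartesian. Following the template of Lemma \ref{lem_p_2_m}, the plan is to first upgrade this cartesian estimate to a cocartesian estimate via the higher dual Blakers-Massey theorem \cite[1.11]{Ching_Harper}, then push through the functor $Q$, and finally convert back to a cartesian estimate using \cite[3.10]{Ching_Harper}. Applying \cite[1.11]{Ching_Harper} requires cartesian estimates on the proper subcubes $(UQ)^m U\mathcal{Y}|_V$: for singletons $|V|=1$ these come directly from the initial map connectivities, while for $2 \leq |V| = d \leq n+1$ they will be supplied by $P(d-1, m)$, which is available via the outer induction hypothesis. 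A direct comparison shows that the singleton partition of $\sett{1,\ldots,n+2}$ minimizes $\sum_{V \in \lambda} k_V = \sum_i k_i$, so \cite[1.11]{Ching_Harper} will yield that $(UQ)^m U\mathcal{Y}$ is $((n+2)+\sum k_i)$-cocartesian. Because the cube is objectwise cofibrant and $(-1)$-connected, $Q$ preserves its cocartesianness, giving the same estimate on $Q(UQ)^m U\mathcal{Y}$; \cite[3.10]{Ching_Harper} in $\AlgO$ then converts this into a $(1+\sum k_i)$-cartesian estimate (matching the shifts by $1$ and $2$ observed for $2$- and $3$-cubes in the preceding lemmas), and $U$ preserves cartesianness, so $(UQ)^{m+1} U\mathcal{Y}$ is $(1+\sum k_i)$-cartesian, as required.

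The main technical point is verifying that the singleton partition minimizes $\sum_V k_V$: merging $|V|$ singletons into a single block $V$ replaces the summand $\sum_{i \in V} k_i$ by $1 + \sum_{i \in V} k_i$ (from $P(|V|-1, m)$), strictly increasing the sum, so any coarsening is suboptimal. Once this bookkeeping is in hand, the argument is essentially a dimension-agnostic reformulation of Lemma \ref{lem_p_2_m}, and no substantively new ideas are required.
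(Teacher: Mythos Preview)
Your proposal is correct and follows essentially the same approach as the paper: induct on $m$, use the outer hypothesis $P(j,m)$ on proper subcubes to feed the higher dual Blakers--Massey theorem \cite[1.11]{Ching_Harper}, observe the singleton partition minimizes, then push the resulting cocartesian estimate through $Q$ and convert back via \cite[3.10]{Ching_Harper}. The only cosmetic difference is that the paper records both $m=0$ and $m=1$ as base cases (mirroring Lemmas~\ref{lem_p_1_m} and~\ref{lem_p_2_m}) before running the induction from $m\geq 1$, whereas you start the induction at $m=0$; your uniform inductive step already covers the passage $m=0\to m=1$, so this is a harmless streamlining.
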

\begin{proof}
Let $\mathcal{Y}$ be an objectwise cofibrant, objectwise $(-1)$-connected, strongly cocartesian $W$-cube in $\AlgJ$ with $W = \sett{1, 2, \ldots, n+2}$ such that for each $s \in W$, the map $Y_\emptyset \to Y_{\sett{s}}$ is $k_s$-connected, with $k_s \geq 0$. Arguing as in Lemma \ref{lem_p_2_m} shows that $P(n+1, 0)$ and $P(n+1,1)$ hold.

Inductively, suppose that $P(n+1, m)$ holds for fixed $m \geq 1$. To show that $P(n+1, m+1)$ holds as well, the strategy is the same as in Lemma \ref{lem_p_2_m}. That is, we first establish a cocartesian estimate on $(UQ)^mU\mathcal{Y}$, which we know by the inductive hypothesis is $(1+\sum_{s \in W}k_s)$-cartesian. 

The assumption that $P(j,m)$ holds for all $1 \leq j \leq n$ and $m \geq 0$ guarantees that the conditions of \cite[1.11]{Ching_Harper} are satisfied. Furthermore, as in Lemma \ref{lem_p_2_m}, this assumption implies that the minimum of $\sum_{V \in \lambda}k_V$ over all partitions $\lambda$ of $W$ by nonempty sets not equal to $W$ is achieved by the partition of singleton sets. It then follows from \cite[1.11]{Ching_Harper} that $(UQ)^mU\mathcal{Y}$ is $(\abs{W}+\sum_{s \in W}k_s)$-cocartesian and, hence, so is $Q(UQ)^mU\mathcal{Y}$. By \cite[3.10]{Ching_Harper}, this means $Q(UQ)^mU\mathcal{Y}$ is $(1+\sum_{s \in W}k_s)$-cartesian, so $UQ(UQ)^mU\mathcal{Y} = (UQ)^{m+1}U\mathcal{Y}$ is as well.
\end{proof}

\bibliographystyle{plain}
\bibliography{aConvergence_bibliography}

\end{document}